\newcommand{\ZZ}{\mathbb{Z}}
\newcommand{\RR}{\mathbb{R}}
\newcommand{\GG}{\mathbb{G}}
\newcommand{\cO}{\mathcal{O}}
\newcommand{\fS}{\mathfrak{S}}
\newcommand{\fX}{\mathfrak{X}}
\DeclareMathOperator{\Frac}{Frac}
\DeclareMathOperator{\spmap}{sp}
\DeclareMathOperator{\Sp}{Sp}
\DeclareMathOperator{\Spf}{Spf}
\DeclareMathOperator{\Spec}{Spec}
\newcommand{\an}{\mathrm{an}}
\newcommand{\formal}{\mathrm{for}}
\newcommand{\red}{\mathrm{red}}
\newcommand{\abs}[1]{\lvert #1 \rvert}
\newcommand{\powerseries}[2]{#1 [\![ #2 ]\!]}
\newcommand{\tatealgebra}[2]{#1 \langle #2 \rangle}
\newcommand{\ov}{\overline}
\newcommand{\defterm}[1]{\textbf{#1}}
\newtheorem{lemma}{Lemma}[section]
\newtheorem{proposition}[lemma]{Proposition}
\newtheorem{theorem}[lemma]{Theorem}
\newtheorem{corollary}[lemma]{Corollary}
\Crefname{conjecture}{Conjecture}{Conjectures} % Work around bug in cleveref
\Crefname{claim}{Claim}{Claims}
\newtheorem*{lemma*}{Lemma}
\newtheorem*{proposition*}{Proposition}
\newtheorem*{theorem*}{Theorem}
\newtheorem*{corollary*}{Corollary}
\newtheorem*{claim*}{Claim}
\theoremstyle{definition}
\newtheorem*{definition}{Definition}
\newtheorem{notation}[lemma]{Notation}
\newtheorem{remark}[lemma]{Remark}
\newcounter{constant}
\newcommand{\newC}[1]{%
  \ifthenelse{\equal{#1}{*}} {%
      \stepcounter{constant} c_{\theconstant}%
  } {%
      \refstepcounter{constant} c_{\theconstant} \label{C:#1}%
  }%
}
\title[Extension of relative rigid homomorphisms]{Extension of relative rigid homomorphisms from the formal multiplicative group}
\author{Martin Orr}
\address{Orr: Department of Mathematics, The University of Manchester, Oxford Road, Manchester M13 9PL, United Kingdom}
\email{martin.orr@manchester.ac.uk}
\subjclass[2020]{14G22, 14L15}
\begin{document}

\begin{abstract}
A theorem of L\"utkebohmert states that a rigid group homomorphism from the formal multiplicative group to a smooth commutative rigid group~$G$, with relatively compact image, can be extended to a homomorphism from the rigid multiplicative group to~$G$.
In this paper, we prove a relative version of this theorem over a geometrically reduced quasi-compact quasi-separated rigid space.
The relative theorem is proved under an additional hypothesis that some open relative subgroup of~$G$ has good reduction.
This theorem is useful for studying rigid uniformisation of abelian or semiabelian varieties in a relative setting.
\end{abstract}

\maketitle

\section{Introduction}

The theory of uniformisation of abeloid varieties (smooth proper rigid analytic groups) was developed by Raynaud \cite{Ray70}, Bosch \cite{Bos79} and Lütkebohmert \cite{Lut95}.  A key ingredient is an extension theorem for homomorphisms from the formal multiplicative group to smooth commutative rigid groups, established in a restricted form at \cite[Satz~2.2]{Bos79} and in general in the following theorem of Lütkebohmert.

\begin{proposition} \label{hom-extends-Lut} \cite[Prop.~3.1]{Lut95}
Let $R$ be a complete discrete valuation ring and let $K = \Frac(R)$.
Let $\ov\GG_m$ denote the rigid $K$-group $\{ x : \abs{x} = 1 \}$.

Let $G$ be a smooth connected commutative rigid group over~$K$.
Let $\bar\phi \colon \ov\GG_m \to G$ be a homomorphism of rigid groups.

If the image of $\bar\phi$ is relatively compact in some quasi-compact open subspace of~$G$, then there exists a unique homomorphism of rigid groups $\phi \colon \GG_m \to G$ whose restriction to $\ov\GG_m$ is equal to $\bar\phi$.
\end{proposition}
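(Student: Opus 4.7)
The plan is to use the relative compactness of $\bar\phi(\ov\GG_m)$ to reduce the problem to an algebraic question on the special fiber of a formal model, obtain a cocharacter of an algebraic torus there, and then lift this cocharacter back to the rigid setting.

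First, let $U \subseteq G$ be a quasi-compact open containing the closure of $\bar\phi(\ov\GG_m)$. Using formal-rigid GAGA (admissible formal blow-ups in the sense of Raynaud), I would produce an admissible formal $R$-model $\fU$ of $U$ and lift $\bar\phi$ to a morphism of formal $R$-schemes $\hat\phi \colon \hat\GG_{m,R} \to \fU$, where $\hat\GG_{m,R} = \Spf R\langle T, T^{-1} \rangle$ is the canonical formal model of $\ov\GG_m$. Because $\bar\phi(1) = e$ and the image $\bar\phi(\ov\GG_m)$ is a subgroup of $G$ contained in $U$, the group law of $G$ restricts on $\bar\phi(\ov\GG_m) \times \bar\phi(\ov\GG_m)$ to a map landing in $U$; after refining the formal model one obtains a formal group law on a formal neighborhood of $e$ inside $\fU$ through which $\hat\phi$ factors.

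Reducing modulo the maximal ideal, the special fiber $\hat\phi_k \colon \GG_{m,k} \to \fU_k$ is a pointed morphism which, because $\hat\phi$ is a group homomorphism, factors as a homomorphism into the smooth connected commutative algebraic $k$-group germ at $e$. By the classical structure theory of cocharacters, the image is an algebraic torus $T_k$, and $\hat\phi_k$ factors through $T_k$. By the canonical liftability of tori under nilpotent deformations, $T_k$ lifts to a formal subtorus $\hat T \subseteq \fU$ over $\Spf R$, and the factorisation lifts to $\hat\phi \colon \hat\GG_{m,R} \to \hat T$. The formal torus $\hat T$ is the $\pi$-adic completion of a split algebraic $R$-torus $T$; its rigid generic fiber is a rigid torus $T^{\rig}$, with $\hat T$ corresponding to the unit polydisc $\ov{T^{\rig}} \subseteq T^{\rig}$. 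The formal cocharacter $\hat\GG_{m,R} \to \hat T$ descends from an algebraic cocharacter $\GG_{m,R} \to T$, which rigidifies to a homomorphism $\psi \colon \GG_m \to T^{\rig}$.

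Finally, I would extend the composition $\ov{T^{\rig}} \hookrightarrow U \subseteq G$ to a rigid homomorphism $\iota \colon T^{\rig} \to G$, by covering $T^{\rig}$ with translates of the unit polydisc $\ov{T^{\rig}}$ and patching via the commutativity of $G$; then $\phi = \iota \circ \psi \colon \GG_m \to G$ is the required extension. Uniqueness follows from rigidity: any two extensions agree on a formal neighborhood of $1 \in \GG_m$ already from their common restriction to $\ov\GG_m$, hence on all of $\GG_m$ by connectedness. I expect the main obstacle to be this last gluing step, since $T^{\rig}$ is not quasi-compact and the image of $\iota$ leaves $U$; this is where the hypotheses of smoothness and commutativity of $G$ should be used essentially, both to guarantee consistency of the translates and to keep the image of each translated chart inside an admissibly modelled open of $G$.
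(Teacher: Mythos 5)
There are two genuine gaps, one reparable by citation and one that makes the argument circular at the decisive point. First, the step ``after refining the formal model one obtains a formal group law on a formal neighborhood of $e$ inside $\fU$ through which $\hat\phi$ factors'' is not routine formal-model yoga: the quasi-compact open $U$ is not a subgroup of $G$, so the special fibre of a formal model of $U$ carries no group structure, and reducing modulo the maximal ideal does not by itself yield a homomorphism of algebraic $k$-groups. Producing a smooth formal group scheme model through which $\bar\phi$ factors, with the morphism defined on the canonical model $\Spf(\tatealgebra{R}{T,T^{-1}})$ rather than on some admissible blow-up of it, is exactly L\"utkebohmert's generation theorem \cite[Thm.~1.2]{Lut95} -- a substantial result whose proof itself relies on the approximation theorem, and whose absence in the relative setting is the reason hypothesis~(i) appears in \cref{hom-extends-thm}. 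If you invoke that theorem explicitly, the first half of your argument (the reduction is a cocharacter onto a torus $T_k$; lift $T_k$ and the cocharacter by rigidity of tori; algebraize) can be pushed through.

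The second gap is fatal as written: your final step, extending the inclusion $\ov{T^{\rig}} \hookrightarrow G$ to a homomorphism $T^{\rig} \to G$, is itself an instance of the proposition being proved (a homomorphism from an affinoid torus with relatively compact image, to be extended to the full rigid torus), so the argument is circular precisely where the analytic difficulty lies -- as you yourself suspect. ``Covering $T^{\rig}$ by translates of the unit polydisc and patching via commutativity'' does not work: a translate $c \cdot \ov{T^{\rig}}$ with $\abs{c} \neq 1$ has no assigned image (defining it would require $\iota(c)$, which is what is being constructed), the width-zero annuli $\{\abs{x} = \abs{c}^n\}$ do not form an admissible covering of $\GG_m$, and commutativity supplies no gluing data. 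This is exactly where \cite{Lut95} (and the relative proof in this paper) do the real work: the approximation theorem yields a morphism $\alpha$ on a strictly larger annulus $\GG_m(\epsilon^2)$ agreeing with $\bar\phi$ modulo~$\pi$, a Laurent-series induction shows that the error term $\bar u = \alpha \cdot \bar\phi^{-1}$ extends to $\GG_m(\epsilon)$, and only then is $\GG_m$ covered by overlapping annuli $A_n$ glued via the partial homomorphism property. Your reduction-and-rigidity argument correctly identifies that $\bar\phi$ factors through an affinoid torus with good reduction, but it provides no mechanism for continuing $\bar\phi$ past $\abs{x} = 1$, which is the actual content of the statement.
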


\pagebreak % COSMETIC

The aim of this article is to prove a relative version of \cref{hom-extends-Lut}, under an additional hypothesis stating that $G$ has an open (relative) subgroup with good reduction.
The main theorem of the article is as follows.
(For the definition of ``relatively $S$-compact'', see section~\ref{sec:relative-compactness}.)

\begin{theorem} \label{hom-extends-thm}
Let $S$ be a geometrically reduced quasi-compact quasi-separated rigid space over~$K$, with formal $R$-model $S_\formal$.
Let $G \to S$ be a smooth separated commutative rigid $S$-group with connected fibres.
Let $\bar\phi \colon \ov\GG_m \times S \to G$ be a homomorphism of rigid $S$-groups.
Suppose that:
\begin{enumerate}[label=(\roman*)]
\item $\bar\phi$ factors through an open $S$-subgroup $H \subset G$, such that $H \to S$ admits a smooth formal $R$-model $H_\formal \to S_\formal$ which is an $S_\formal$-group;
\item there is a quasi-compact open subspace $X$ of~$G$ such that the image of $\bar\phi$ is relatively $S$-compact in~$X$.
\end{enumerate}
Then there exists a unique homomorphism of rigid $S$-groups $\phi \colon \GG_m \times S \to G$ whose restriction to $\ov\GG_m \times S$ is equal to $\bar\phi$.
\end{theorem}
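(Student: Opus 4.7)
I will follow Lütkebohmert's strategy in the proof of \cref{hom-extends-Lut}, with hypothesis~(i) (a smooth formal model of the open subgroup $H$) playing the role of good reduction in the absolute setting.

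\emph{Uniqueness} follows from standard density arguments: given two extensions $\phi, \phi'$, their equalizer is a closed rigid $S$-subspace of $\GG_m \times S$ (by separatedness of $G \to S$) containing the admissible open $\ov\GG_m \times S$. Reducedness of $\GG_m \times S$ together with Zariski density of $\ov\GG_m$ in $\GG_m$ forces the equalizer to be all of $\GG_m \times S$, so $\phi = \phi'$.

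For \emph{existence}, I would carry out two steps. First, produce a morphism of formal $S_\formal$-group schemes $\bar\phi_\formal \colon \widehat{\GG}_{m,S_\formal} \to H_\formal$ refining $\bar\phi$, where $\widehat{\GG}_{m,S_\formal} = \Spf \cO_{S_\formal}\langle x, x^{-1} \rangle$ is the canonical smooth formal model of $\ov\GG_m \times S$. Here Raynaud's formal/rigid dictionary enters: hypothesis~(ii) places the rigid image in a quasi-compact open of $G$, so (intersecting with $H$ and using smoothness of $H_\formal$) one finds a formal lift after possibly refining $S_\formal$ by admissible blow-ups; geometric reducedness of $S$ then both pins the lift down uniquely and forces it to respect the group law. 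Second, extend $\bar\phi$ from $\ov\GG_m \times S$ to all of $\GG_m \times S$ by exhausting $\GG_m$ via the admissible cover $\{A_n\}$ with $A_n = \{|\pi|^n \leq |x| \leq |\pi|^{-n}\}$. Given $\phi$ on $A_{n-1} \times S$, one extends to $A_n \times S$ using the group-law functional equation $\phi(xy) = \phi(x)\phi(y)$: for $x$ with $|\pi|^{-n+1} < |x| \leq |\pi|^{-n}$ one writes $x = \pi^{-1} y$ with $y \in A_{n-1}$ and sets $\phi(x) = \sigma^{-1} \cdot \phi(y)$, where $\sigma \colon S \to H$ is a section representing $\phi(\pi)$.

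\emph{Main obstacle.} The principal difficulty is producing the section $\sigma = \phi(\pi)$ uniformly over $S$ and compatibly with the group law at each stage. In the absolute setting $\sigma$ is extracted from the formal data of $\bar\phi_\formal$: the homomorphism property forces the Laurent coefficients of $\bar\phi_\formal$ to decay quickly enough that the corresponding series makes sense at $x = \pi$. In the relative version these coefficients become global sections on $S_\formal$, and one must show that their norms decay uniformly on $S$. Hypothesis~(i) is essential here, since $\sigma$ must lie in the good-reduction subgroup $H(S)$; and the geometric reducedness of $S$ is what allows fibrewise bounds (where the absolute argument of \cref{hom-extends-Lut} applies directly) to be promoted to uniform bounds across $S$.
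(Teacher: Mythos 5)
There is a genuine gap, and it sits exactly at the point you flag as the ``main obstacle.'' Your existence argument needs the section $\sigma = \phi(\pi)$, i.e.\ a value of the not-yet-constructed extension at a point with $\abs{\pi}<1$, and you justify its existence by asserting that ``the homomorphism property forces the Laurent coefficients of $\bar\phi_\formal$ to decay quickly enough that the corresponding series makes sense at $x=\pi$.'' That assertion is false: the identity map $\ov\GG_m \to \ov\GG_m$ is a homomorphism of smooth connected commutative rigid groups whose Laurent coefficients do not decay and which does not extend to $\GG_m$. The decay of coefficients is not a formal consequence of being a homomorphism on $\ov\GG_m$; it is precisely where the relative compactness hypothesis (ii) must be used, and your sketch never uses it beyond a vague appeal to Raynaud's dictionary. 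The paper's proof gets the decay by a completely different mechanism: Lütkebohmert's Approximation Theorem (\cref{approximation:formal}), together with the thickening lemma \cref{relatively-compact-thickening}, produces a morphism $\alpha \colon \GG_m(\epsilon^2)\times S \to G$ on a \emph{strictly larger} annulus with $\alpha \equiv \bar\phi \bmod \pi$ relative to $H_\formal$ (\cref{alpha-approximation}); then the defect morphism $w_\alpha(x_1,x_2,s)=\alpha(x_1^{-1}x_2^{-1},s)\alpha(x_1,s)\alpha(x_2,s)$, which is defined on the larger annulus, is fed into an inductive comparison of Laurent coefficients (\cref{induction}, \cref{coeffs-congruence-lemma}) which yields $\abs{\bar u_{ij}}_S \leq \epsilon^{2\abs{j}}$ and hence the extension of $\bar u = \alpha|_{\ov\GG_m\times S}\cdot\bar\phi^{-1}$, and with it of $\bar\phi$, to $\GG_m(\epsilon)\times S$. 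Nothing in your proposal replaces this step, and constructing a formal group-scheme model $\widehat\GG_{m,S_\formal} \to H_\formal$ of $\bar\phi$ (your Step 1) does not help, since such a model exists equally in the non-extendable example above.

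A secondary problem is that even granting $\sigma=\phi(\pi)$, your Step 2 cannot get off the ground from $\ov\GG_m$ alone: writing $x=\pi^{-1}y$ with $y\in\ov\GG_m$ only reaches the circles $\abs{x}=\abs{\pi}^{n}$, so the functional equation determines nothing on the open annuli in between, and there are no overlaps to glue along. This is why the paper first extends $\bar\phi$ to the thickened annulus $\GG_m(\epsilon)\times S$ (adjusting $K$ so that $\abs{\pi}=\epsilon$) and only then performs the gluing over the cover $\{A_n\}$ via $\psi_n(\xi,s)=\psi(\pi,s)^n\cdot\psi(\pi^{-n}\xi,s)$; your final gluing is essentially this last, easy step, but the heart of the theorem --- the extension to a fat annulus, resting on hypothesis (ii) through the approximation and thickening results --- is missing from your argument. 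Your uniqueness argument is fine and matches the paper's appeal to the identity principle.
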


Condition~(i) is included in \cref{hom-extends-thm} because we do not yet have a relative version of \cite[Thm~1.2]{Lut95}, which gives the existence of a smooth formal group scheme generated by a morphism from a smooth formal scheme to a rigid group.

The author's motivation for writing this paper was his work with Daw on large Galois orbits in families of abelian varieties with multiplicative degeneration \cite{DO:LGO}, which requires a rigid uniformisation result for families of abelian varieties.
For this application, it is convenient to use the following corollary of \cref{hom-extends-thm}, which replaces the relative compactness condition~(ii) by the condition that the morphism $G \to S$ is without boundary.
See section~\ref{sec:without-boundary} for the definition of morphisms without boundary.
In particular, the following are examples of morphisms without boundary:
\begin{enumerate}
\item proper morphisms of rigid spaces;
\item rigid analytifications of morphisms of separated $K$-schemes of finite type.
\end{enumerate}

\begin{corollary} \label{hom-extends-no-boundary}
Let $S$ be a geometrically reduced quasi-compact quasi-separated rigid space over~$K$, with formal $R$-model $S_\formal$.
Let $G \to S$ be a smooth separated commutative rigid $S$-group with connected fibres.
Let $\bar\phi \colon \ov\GG_m^g \times S \to G$ be a homomorphism of rigid $S$-groups.
Suppose that:
\begin{enumerate}[label=(\roman*)]
\item $\bar\phi$ factors through an open $S$-subgroup $H \subset G$, such that $H \to S$ admits a smooth formal $R$-model $H_\formal \to S_\formal$ which is an $S_\formal$-group;
\item $G \to S$ is without boundary.
\end{enumerate}
Then there exists a unique homomorphism of rigid $S$-groups $\phi \colon \GG_m^g \times S \to G$ whose restriction to $\ov\GG_m^g \times S$ is equal to $\bar\phi$.
\end{corollary}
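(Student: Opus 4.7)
The corollary should reduce straightforwardly to \cref{hom-extends-thm}, via two independent steps: first a reduction from $\GG_m^g$ to $\GG_m$ using commutativity of~$G$, and then verification of the relative compactness hypothesis~(ii) of \cref{hom-extends-thm} using the ``without boundary'' assumption.

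For the first step, I would denote by $\bar\phi_i \colon \ov\GG_m \times S \to G$ the restriction of $\bar\phi$ to the $i$-th coordinate factor of $\ov\GG_m^g$, for $i=1,\dotsc,g$. Since $G \to S$ is a commutative $S$-group, the product
\[
\phi(x_1,\dotsc,x_g) := \phi_1(x_1)\cdot\phi_2(x_2)\dotsm\phi_g(x_g)
\]
is well-defined as soon as each individual extension $\phi_i \colon \GG_m \times S \to G$ exists, and it is again an $S$-group homomorphism that restricts to~$\bar\phi$ on $\ov\GG_m^g \times S$ (again by commutativity of~$G$, applied to~$\bar\phi$ itself). Uniqueness of the $\phi_i$ on the coordinate factors forces any extension~$\phi$ to take this product form, so producing $\phi$ reduces to producing each~$\phi_i$.

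For the second step, I would apply \cref{hom-extends-thm} to each~$\bar\phi_i$. Hypothesis~(i) of the theorem is inherited from hypothesis~(i) of the corollary, since the same open subgroup~$H$ contains the images of all restrictions. It remains to verify hypothesis~(ii), i.e.\ to find a quasi-compact open $X \subset G$ containing the image of~$\bar\phi_i$ in a relatively $S$-compact fashion. Since $S$ is quasi-compact and $\ov\GG_m$ is affinoid, the source $\ov\GG_m \times S$ is quasi-compact, and its image in~$G$ is therefore contained in some quasi-compact open $X_0 \subset G$.

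The heart of the argument is then to promote this mere containment to relative $S$-compactness, and this is precisely what the ``without boundary'' hypothesis on $G \to S$ is designed to provide: a morphism without boundary has the property that every quasi-compact $S$-subspace of the target admits a quasi-compact open neighbourhood in which it is relatively $S$-compact. Applying this to $X_0$ (or directly to the image of $\bar\phi_i$) produces the desired~$X$. The main obstacle in writing out the proof is therefore not the structural reduction but verifying, from whichever precise formulation of ``morphism without boundary'' is adopted in \cref{sec:without-boundary}, that this enlargement step is available; in particular one should check it in the two listed examples (proper morphisms and analytifications of separated morphisms of finite type), where the property is essentially tautological for the former and follows from algebraic compactifications together with \cref{sec:relative-compactness} for the latter.
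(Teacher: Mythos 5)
Your proposal follows essentially the same route as the paper: split $\bar\phi$ into its coordinate restrictions $\bar\phi_i$, use ``without boundary'' to supply the relatively $S$-compact hypothesis~(ii) of \cref{hom-extends-thm} for each $\bar\phi_i$, apply the theorem, and multiply the resulting extensions using commutativity. Two small points where the paper does something slightly tighter: it first localises over an affinoid covering $\{S_i\}$ of~$S$ so that each $G \times_S S_i \to S_i$ is \emph{globally} without boundary (the enlargement step, \cref{without-boundary-U}, is formulated for globally-without-boundary morphisms, and the hypothesis only gives ``without boundary''), and rather than bounding the image of $\bar\phi_i$ by an ad hoc quasi-compact open $X_0$, it observes that $\bar\phi_i$ already factors through the open subgroup~$H$, which is quasi-compact because it has a formal model, and then takes $X$ with $H \Subset_S X$. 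Neither difference affects the correctness of your outline, but both steps should be made explicit to close the ``enlargement'' verification you flag at the end.
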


\subsection{Notes on the proof of Theorem~\ref{hom-extends-thm}}

The proof of \cref{hom-extends-thm} is based on the proof of \cref{hom-extends-Lut} found in \cite{Lut95} and \cite{Lut16}.
A key ingredient is Lütkebohmert's approximation theorem for rigid analytic morphisms \cite[Thm~7.4]{Lut95}.

In \cite{Lut95}, \cite[Thm.~1.2]{Lut95} is used to show that, in the situation of \cref{hom-extends-Lut}, condition~(i) of \cref{hom-extends-thm} is always satisfied.
It is also used (in the proofs of both \cite[Prop.~3.1]{Lut95} and \cite[Prop.~7.3.1]{Lut16}) to obtain a subgroup of~$G$ with smooth formal model with unipotent reduction.
Since we have not proved a relative version of \cite[Thm.~1.2]{Lut95}, we include condition~(i) in the statement of \cref{hom-extends-thm}.
We replace the second use of \cite[Thm~1.2]{Lut95} by the fact that the formal fibre of~$H$ at the identity element is an open ball (the relative version is \cite[Thm.~1.3.2]{Berthelot}) and calculations with power series showing that any closed ball in this fibre centered at the origin is itself a subgroup (\cref{formal-group-congruence-subgroup}).

The remainder of the proof follows that of \cite[Prop.~3.1]{Lut95}.

\subsection{Outline of the paper}

In section~\ref{sec:relative-compactness}, we recall the definition of a relatively $S$-compact subspace of a rigid space over~$S$, prove some elementary results about relatively $S$-compact subspaces, and recall the statement of the rigid Approximation Theorem.
In section~\ref{sec:group-laws-balls}, we prove some elementary results about the power series representing $S$-group laws on rigid balls over~$S$.
Section~\ref{sec:main-proof} contains the main proof of \cref{hom-extends-thm}.
Finally, in section~\ref{sec:without-boundary}, we recall the definition of morphisms of rigid spaces without boundary and prove \cref{hom-extends-no-boundary}.

\subsection{Notation}

Throughout the paper, $R$ denotes a complete discrete valuation ring, $K$ its field of fractions and $k$ its residue field.
% Let $\pi$ denote a uniformiser of~$R$.
Rigid spaces will always mean rigid spaces over~$K$ and formal schemes will always mean admissible formal schemes over $\Spf(R)$.

If $X_\formal$ is a formal scheme over~$\Spf(R)$, then $X_0$ denotes the $k$-scheme
\[ X_0 = X \times_{\Spf(R)} \Spec(k). \]
If $X_\formal$ is admissible and quasi-compact, then $X$ denotes Raynaud's rigid generic fibre of~$X_\formal$.

Let $\ov\GG_m$ denote the following open subgroup of the rigid $K$-group $\GG_m$:
\[ \ov\GG_m = \{ x : \abs{x} = 1 \}. \]

For $a \in K$, $r \in \RR_{>0}$, we use the following notation for rigid discs:
\begin{align*}
    D(a,r) & = \{ x : \abs{x-a} \leq r \},
  & D(r) & = D(0,r),
  & D & = D(0,1),
\\  D^+(a,r) & = \{ x : \abs{x-a} < r \},
  & D^+(r) & = D^+(0,r),
  & D^+ & = D^+(0,1).
\end{align*}

If $S$ is a reduced affinoid space, let $\cO(S)$ denote the ring of holomorphic functions on~$S$, with sup norm $\abs{\cdot}_S$.
Let
\[ \cO^\circ(S) = \{ f \in \cO(S) : \abs{f}_S \leq 1 \}. \]
Since we assume that $R$ is a discrete valuation ring, $\cO^\circ(S)$ is an $R$-algebra of topologically finite type \cite[Thm.~3.1.17]{Lut16}.

\subsection*{Acknowledgements}

I am very grateful to Werner Lütkebohmert for patient answers to my questions about relativising \cref{hom-extends-Lut}, in particular for suggesting that the argument in \cite{Lut95} should relativise, for helping me to understand the approximation theorem, and for suggesting how to prove \cref{relatively-compact-thickening}.
I am also grateful to Christopher Daw for helpful discussions.

\section{Relative \texorpdfstring{$S$}{S}-compactness} \label{sec:relative-compactness}

In this section, we prove several lemmas about relatively $S$-compact subspaces of rigid spaces, an essential notion in the statement and proof of \cref{hom-extends-thm}.  We also recall the statement of L\"utkebohmert's Approximation Theorem for rigid morphisms, another key ingredient in the proof of \cref{hom-extends-thm}.

\subsection{Definition of relatively \texorpdfstring{$S$}{S}-compact subspaces}

\begin{definition} \cite[Def.~3.6.1]{Lut16}
Let $X \to S$ be a morphism of quasi-compact, quasi-separated rigid spaces.
An open subspace $U \subset X$ is said to be \defterm{relatively $S$-compact}, written $U \Subset_S X$,
if there exists a formal $R$-model $X_\formal \to S_\formal$ of $X \to S$ such that:
\begin{enumerate}
\item $U$ is induced by an open formal subscheme $U_\formal \subset X_\formal$;
\item the schematic closure of $U_0$ in $X_0$ is proper over $S_0$.
\end{enumerate}

% More generally, let $Y \to S$ be a morphism of rigid spaces where $Y$ is quasi-separated and $S$ is quasi-compact and quasi-separated.
% Let $A$ be any subset of~$Y$.
% We say that $A$ is relatively $S$-compact in~$Y$ if there exist quasi-compact open subspaces $U \subset X \subset Y$ such that $A \subset U \Subset_S X$.

A general subset of $X$ is said to be \defterm{relatively $S$-compact} in~$X$ if it is contained in a relatively $S$-compact open subspace of~$X$.
\end{definition}

From the remarks after \cite[Def.~3.6.1]{Lut16}, we note the following properties of this definition.

\begin{remark} \label{relatively-compact-every-model}
If an open subspace $U \subset X$ is relatively $S$-compact, then part~(2) of the definition holds for every formal model $X_\formal \to S_\formal$ such that $U$ is induced by an open formal subscheme of $X_\formal$.
\end{remark}

\begin{remark} \label{relatively-compact-analytic-def}
If $X$, $S$ and~$U$ are all affinoid, then this definition of relative $S$-compactness is equivalent to Kiehl's definition \cite[Def.~2.1]{Kie67}:
$U$ is relatively $S$-compact in~$X$ if there exists a closed immersion $X \to D(1)^n \times S$ which maps $U$ into $D(r)^n \times S$ for some $r<1$.
\end{remark}

\subsection{Fundamental lemmas about relatively compact subspaces}

The following lemmas generalise \cite[Lemma~9.6.2/1]{BGR84} and \cite[Prop.~1.4]{Lam99}, respectively, from affinoid spaces to quasi-compact quasi-separated spaces.

\begin{lemma} \label{relatively-compact-base-change2}
Let $X_1 \to Y$ and $X_2 \to Y$ be two morphisms of quasi-compact, quasi-separated rigid spaces.
Let $U_1$ be an open subspace of $X_1$.
If $U_1 \Subset_Y X_1$, then $U_1 \times_Y X_2 \Subset_{X_2} X_1 \times_Y X_2$.
\end{lemma}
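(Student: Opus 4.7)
The plan is to build a formal model of $X_1 \times_Y X_2 \to X_2$ directly from the data witnessing $U_1 \Subset_Y X_1$, and then deduce the required properness from a base-change argument on the special fibres.

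First, I would fix a formal $R$-model $X_{1,\formal} \to Y_\formal$ of $X_1 \to Y$ together with an open formal subscheme $U_{1,\formal} \subset X_{1,\formal}$ inducing $U_1$, such that the schematic closure $\ov{U_{1,0}}$ of $U_{1,0}$ in $X_{1,0}$ is proper over $Y_0$; this is precisely the data supplied by $U_1 \Subset_Y X_1$. By Raynaud's theorem, after admissibly blowing up $Y_\formal$ (and pulling back $X_{1,\formal}$ along this blow-up, which does not destroy the relative compactness witness by \cref{relatively-compact-every-model}), I may then choose a formal $R$-model $X_{2,\formal} \to Y_\formal$ of $X_2 \to Y$ living over the same base.

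Next I form the fibre product $X_{1,\formal} \times_{Y_\formal} X_{2,\formal}$, which is an admissible formal $R$-scheme and a formal model of $X_1 \times_Y X_2$; equipped with its projection to $X_{2,\formal}$, it is a formal model of the morphism $X_1 \times_Y X_2 \to X_2$. The open formal subscheme $U_{1,\formal} \times_{Y_\formal} X_{2,\formal}$ induces the open rigid subspace $U_1 \times_Y X_2$, and its special fibre is $U_{1,0} \times_{Y_0} X_{2,0}$.

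It remains to check that the schematic closure of $U_{1,0} \times_{Y_0} X_{2,0}$ in $X_{1,0} \times_{Y_0} X_{2,0}$ is proper over $X_{2,0}$. This closure is contained in $\ov{U_{1,0}} \times_{Y_0} X_{2,0}$; by hypothesis $\ov{U_{1,0}} \to Y_0$ is proper, hence so is its base change $\ov{U_{1,0}} \times_{Y_0} X_{2,0} \to X_{2,0}$, and any closed subscheme of a proper $X_{2,0}$-scheme is proper over $X_{2,0}$. The only substantive point is the simultaneous choice of compatible formal models of $X_1$ and $X_2$ over a common $Y_\formal$ with $U_{1,\formal}$ still carved out by an open formal subscheme; this is a routine application of Raynaud's formal-to-rigid dictionary together with \cref{relatively-compact-every-model}, and I do not expect it to pose a serious obstacle.
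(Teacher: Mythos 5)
Your proposal is correct and follows essentially the same route as the paper: both form the fibre product of formal models over a common $Y_\formal$ with $U_1$ carved out by an open formal subscheme, note that the schematic closure of $(U_{1}\times_Y X_2)_0$ is a closed subscheme of $\ov U_{1,0}\times_{Y_0}X_{2,0}$, and conclude by base change of properness. The only difference is that you spell out the existence of compatible models (via a blow-up of $Y_\formal$ and \cref{relatively-compact-every-model}), a step the paper absorbs into its initial choice of models.
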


\begin{proof}
Choose formal models $X_{1,\formal} \to Y_\formal$ and $X_{2,\formal} \to Y_\formal$ such that $U_1$ is induced by a formal open subscheme $U_\formal \subset X_{1,\formal}$.
Let $\ov U_{1,0}$ denote the schematic closure of $U_{1,0}$ in $X_{1,0}$.

Let $W_\formal = U_{1,\formal} \times_{Y_\formal} X_{2,\formal}$ and $Z_{\formal} = X_{1,\formal} \times_{Y_\formal} X_{2,\formal}$.
Let $\ov W_0$ denote the schematic closure of $W_0$ in~$Z_0$.

Now $\ov U_{1,0} \times_{Y_0} X_{2,0}$ is a closed subscheme of $Z_0$ through which $W_0 \hookrightarrow Z_0$ factors.
Thus $\ov W_0$ is a closed subscheme of $\ov U_{1,0} \times_{Y_0} Z_0$.

By the hypothesis of the lemma, $\ov U_{1,0} \to Y_0$ is proper.
Hence, by base change, $\ov U_{1,0} \times_{Y_0} X_{2,0} \to X_{2,0}$ is proper.
Composing this with the closed immersion $\ov W_0 \to \ov U_{1,0} \times_{Y_0} Z_0$, we deduce that $\ov W_0 \to X_{2,0}$ is proper.
Thus $W \Subset_{X_2} Z$, as required.
\end{proof}

\begin{lemma} \label{relatively-compact-union}
Let $Y \to S$ be a morphism of quasi-compact, quasi-separated rigid spaces.
Let $U \subset Y$ be a quasi-compact open subspace.
Then the following are equivalent:
\begin{enumerate}[(i)]
\item $U \Subset_S Y$;
\item there exist finitely many quasi-compact open subspaces $V_i \subset Y$ such that $U \subset \bigcup_i V_i$ and $V_i \Subset_S Y$.
\end{enumerate}
\end{lemma}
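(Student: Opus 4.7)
The direction (i) $\Rightarrow$ (ii) is essentially free: if $U \Subset_S Y$ then there is a quasi-compact open $V \subset Y$ with $U \subset V$ and $V \Subset_S Y$, so we take $V_1 = V$.

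The substantive direction is (ii) $\Rightarrow$ (i). The plan is to reduce to a single formal model on which everything is visible simultaneously, and then argue that the schematic closure of $U_0$ sits inside a finite union of the schematic closures of the $V_{i,0}$, which is proper over $S_0$. More precisely, I would first invoke Raynaud's theorem (as formulated in the relevant parts of \cite{Lut16}): since $U$ and all the $V_i$ are quasi-compact open subspaces of the quasi-compact quasi-separated rigid space $Y$, after passing to a suitable admissible formal blow-up of any chosen formal $R$-model $Y_\formal \to S_\formal$, we can arrange simultaneously that $U$ is induced by an open formal subscheme $U_\formal \subset Y_\formal$ and each $V_i$ is induced by an open formal subscheme $V_{i,\formal} \subset Y_\formal$. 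By \cref{relatively-compact-every-model}, the hypothesis $V_i \Subset_S Y$ now means that, in this common model, the schematic closure $\ov V_{i,0}$ of $V_{i,0}$ in $Y_0$ is proper over $S_0$.

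Next I would use the inclusion $U \subset \bigcup_i V_i$ on the rigid side to deduce the corresponding inclusion $U_0 \subset \bigcup_i V_{i,0}$ at the level of special fibres. Let $\ov U_0$ denote the schematic closure of $U_0$ in $Y_0$. The finite union $Z_0 := \bigcup_i \ov V_{i,0}$ is a closed subscheme of $Y_0$, and since each $V_{i,0} \subset \ov V_{i,0} \subset Z_0$ the inclusion $U_0 \subset \bigcup_i V_{i,0}$ factors through the closed subscheme $Z_0$. Taking schematic closures, $\ov U_0$ is a closed subscheme of $Z_0$. Since a finite union of $S_0$-proper closed subschemes is proper over $S_0$, and a closed subscheme of a proper $S_0$-scheme is itself proper over $S_0$, we conclude that $\ov U_0 \to S_0$ is proper. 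This is precisely condition~(2) of the definition applied to the model $Y_\formal$ and the open $U_\formal$, so $U \Subset_S Y$.

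The main obstacle is the book-keeping step of arranging a common formal model in which $U$ and all the $V_i$ are simultaneously formal opens; this is a standard application of Raynaud's dictionary together with the fact that admissible formal blow-ups form a cofinal system of refinements, but it must be invoked carefully since relative $S$-compactness is a statement about a particular model and we need \cref{relatively-compact-every-model} to transfer the hypothesis from the given models for the $V_i$ to the common refined model. Once that is in place, the set-theoretic inclusion argument and the elementary scheme-theoretic facts about properness (closed subschemes, finite unions) finish the proof in a few lines.
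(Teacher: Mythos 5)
Your proposal follows the same strategy as the paper: pass to a common formal model, use \cref{relatively-compact-every-model} to transfer the hypothesis to that model, and show that the schematic closure $\ov U_0$ lies inside a finite union of proper closed subschemes. The one place that needs more care is the scheme structure on $Z_0 = \bigcup_i \ov V_{i,0}$, which you leave unspecified. Since the special fibre $Y_0$ (and hence $U_0$, $\ov U_0$) need not be reduced, the immersion $U_0 \hookrightarrow Y_0$ factors through $Z_0$ scheme-theoretically only if $Z_0$ is given a thick enough structure (e.g.\ the scheme-theoretic union cut out by $\bigcap_i \cI_{\ov V_{i,0}}$); on the other hand, your assertion that "a finite union of $S_0$-proper closed subschemes is proper over $S_0$" is most easily verified after passing to reduced structures. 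The paper reconciles these two pulls by giving $W_0 = \bigcup_i \ov V_{i,0}$ the \emph{reduced} structure and invoking [EGA~II, Cor.~5.4.6] twice: once to reduce the $\ov V_{i,0}$ before applying [EGA~II, Cor.~5.4.5] to get $W_0 \to S_0$ proper, and once at the end, after noting that $(\ov U_0)_\red \hookrightarrow W_0$ is a closed immersion of reduced schemes (so $(\ov U_0)_\red$ is proper), to pass back to properness of $\ov U_0$ itself. With this bookkeeping supplied your argument is complete and coincides with the paper's; the formal-model reduction you flag as the main obstacle is handled identically in the paper and is the routine part.
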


\begin{proof}
(i) $\Rightarrow$ (ii) is obvious: just take $V_1 = U$.

(ii) $\Rightarrow$ (i): Choose a formal $R$-model $Y_\formal \to S_\formal$ for $Y \to S$ such that $U$ and $V_i$ are induced by open formal subschemes $U_\formal, V_{i,\formal} \subset Y_\formal$.
Let $\ov U_0$, $\ov V_{i,0}$ denote the schematic closures of $U_0$, $V_{i,0}$, respectively, in $Y_0$.
By \cref{relatively-compact-every-model}, each $\ov V_{i,0}$ is proper over~$S_0$.

Let $W_0 = \bigcup_i \ov V_{i,0} \subset Y_0$.
This is a Zariski closed subset of~$Y_0$, and we give it the reduced closed subscheme structure induced from~$Y_0$.
By \cite[Cor.~5.4.6]{EGAII}, each $(\ov V_{i,0})_\red$ is proper over~$S_0$.
Hence, by \cite[Cor.~5.4.5]{EGAII}, $W_0 \to S_0$ is proper.
We have $\ov U_0 \subset W_0$, so there is a closed immersion of schemes $(\ov U_0)_\red \to W_0$.
Hence $(\ov U_0)_\red \to S_0$ is proper.
By \cite[Cor.~5.4.6]{EGAII}, it follows $\ov U_0 \to S_0$ is proper.  Thus $U \Subset_S Y$.
\end{proof}

\subsection{Thickening of relatively compact subspaces} \label{subsec:thickening}

For $0 < r \leq 1$, define the following rigid spaces over~$K$:
\begin{align*}
    A(r) & = \{ x : r \leq \abs{x} \leq r^{-1} \},
\\  A^{\leq}(r) & = \{ x : r \leq \abs{x} \leq 1 \},
\\  A^{\geq}(r) & = \{ x : 1 \leq \abs{x} \leq r^{-1} \}.
\end{align*}

The following lemma, while elementary, is the main additional ingredient required for the proof of \cref{hom-extends-thm} beyond simply relativising the proof of \cite[Prop.~7.3.1]{Lut16}.
Its proof was suggested to the author by Lütkebohmert.

\begin{lemma} \label{relatively-compact-thickening}
Fix $r > 1$.
Let $S$ be a quasi-compact, quasi-separated rigid space.
Let $Y \subset A(r) \times S$ be a quasi-compact open subspace such that $A(1) \times S \Subset_S Y$.
Then $Y$ contains $A(r') \times S$ for some $r' < 1$.
\end{lemma}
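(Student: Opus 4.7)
The plan is to reduce to the case where $S$ is affinoid, then construct a specific semistable formal model of $A(r)$ whose special fibre contains a distinguished central $\mathbb{P}^1_k$ component corresponding to the unit circle $A(1)$, and use the properness hypothesis to force the formal model of $Y$ to contain this entire component.

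First I would reduce to $S$ affinoid. Since $S$ is quasi-compact, cover it by finitely many affinoid opens $S_1,\dots,S_N$. By \cref{relatively-compact-base-change2} the hypothesis restricts to $A(1)\times S_i \Subset_{S_i} Y\times_S S_i$ for each $i$, and if the affinoid case yields $r'_i<1$ with $A(r'_i)\times S_i\subset Y\times_S S_i$, then $r'=\max_i r'_i<1$ works for $S$.

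Next, construct a convenient formal model of $A(r)$. Choose $a\in R$ with $|a|=r$ (after a finite extension of $K$ if necessary). The standard formal model $\Spf R\langle U, V\rangle/(UV - a^2)$ of $A(r)$, with $U=aT$ and $V=a/T$, has special fibre the union of two affine lines meeting at the node $U=V=0$. Admissibly blowing up this node produces a formal model $\widetilde{\mathcal{A}}$ whose special fibre is a chain $\mathbb{A}^1_k\cup \mathbb{P}^1_k\cup \mathbb{A}^1_k$ joined at two nodes; under the specialization map $\spmap\colon A(r)\to\widetilde{\mathcal{A}}_0$, the preimage of the dense $\mathbb{G}_m\subset\mathbb{P}^1_k$ is exactly $A(1)$, while the preimage of the whole central $\mathbb{P}^1_k$ is the open annulus $\{r<|T|<r^{-1}\}$. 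By Raynaud's theorem, after a further admissible blowup $\phi\colon\mathcal{X}\to\widetilde{\mathcal{A}}\times\mathcal{S}$ we can arrange $\mathcal{X}$ to be a formal model of $A(r)\times S$ with open formal subschemes $\mathcal{W}\subset\mathcal{Y}\subset\mathcal{X}$ inducing $A(1)\times S\subset Y$, and by \cref{relatively-compact-every-model}, the schematic closure $\overline{\mathcal{W}_0}$ of $\mathcal{W}_0$ in $\mathcal{Y}_0$ is proper over $\mathcal{S}_0$.

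The main technical step — which I expect to be the principal obstacle — is to upgrade this properness to the inclusion $\mathcal{Y}_0\supset\phi^{-1}(\mathbb{P}^1_k\times\mathcal{S}_0)$. The key observation is that $\phi(\overline{\mathcal{W}_0})$ is closed in $\widetilde{\mathcal{A}}_0\times\mathcal{S}_0$ (by properness of the blowup $\phi$) and contains $\phi(\mathcal{W}_0)$, which surjects onto $\mathbb{G}_m\times\mathcal{S}_0$; hence $\phi(\overline{\mathcal{W}_0}) = \mathbb{P}^1_k\times\mathcal{S}_0$. Combining this with a careful choice of formal models (ensuring in particular that the blowup centres avoid the central component) and a fibrewise analysis, the open subscheme $\mathcal{Y}_0\cap\phi^{-1}(\mathbb{P}^1_k\times\mathcal{S}_0)$ is proper over $\mathcal{S}_0$, and fibrewise equals the whole fibre by the fact that a proper open subscheme of a connected projective scheme over a field is the entire scheme. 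Once $\mathcal{Y}_0\supset\phi^{-1}(\mathbb{P}^1_k\times\mathcal{S}_0)$ is established, taking rigid preimages yields
\[
Y = \spmap_\mathcal{X}^{-1}(\mathcal{Y}_0) \supset \spmap_\mathcal{X}^{-1}\bigl(\phi^{-1}(\mathbb{P}^1_k\times\mathcal{S}_0)\bigr) = \spmap_{\widetilde{\mathcal{A}}\times\mathcal{S}}^{-1}(\mathbb{P}^1_k\times\mathcal{S}_0) = \{r<|T|<r^{-1}\}\times S,
\]
so for any $r'\in(r,1)$ we have $A(r')\times S\subset\{r<|T|<r^{-1}\}\times S\subset Y$, completing the proof.
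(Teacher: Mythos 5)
Your approach is genuinely different from the paper's: you build a specific semistable formal model of $A(r)$ (after reducing to $S$ affinoid) whose special fibre has a central $\mathbb{P}^1_k$ component sitting over $A(1)$, and then try to force $\mathcal{Y}_0$ to contain the preimage of this $\mathbb{P}^1_k \times \mathcal{S}_0$ via a properness argument. The paper instead works with an arbitrary formal model $X_\formal \supset Y_\formal \supset U_\formal$, passes to the complement $Z = \spmap^{-1}(X_0 \setminus \ov U_0)$, and uses quasi-compactness together with the maximum modulus principle to show that $\sup\{|x| : (x,s)\in Z,\ |x|\le 1\} < 1$. The paper's method avoids any need to control the structure of the chosen formal model, and it requires no reduction to $S$ affinoid.

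There is a genuine gap at the step you yourself flag as ``the principal obstacle.'' You need the inclusion $\mathcal{Y}_0 \supset \phi^{-1}(\mathbb{P}^1_k \times \mathcal{S}_0)$, but what the hypothesis gives you is only that $\ov{\mathcal{W}_0}$ (the schematic closure of $\mathcal{W}_0$ in $\mathcal{Y}_0$) is proper over $\mathcal{S}_0$. From this you correctly deduce $\phi(\ov{\mathcal{W}_0}) \supset \mathbb{P}^1_k \times \mathcal{S}_0$ (the stated equality is incorrect, but only $\supset$ is needed). However, surjectivity of $\ov{\mathcal{W}_0} \to \mathbb{P}^1_k\times\mathcal{S}_0$ does not give $\ov{\mathcal{W}_0} \supset \phi^{-1}(\mathbb{P}^1_k \times \mathcal{S}_0)$, and a fortiori does not give $\mathcal{Y}_0 \supset \phi^{-1}(\mathbb{P}^1_k \times \mathcal{S}_0)$. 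The two specific holes are: (a) the further admissible blowup $\phi$ needed to make $Y$ into an open formal subscheme is produced by Raynaud's theorem with no control over its centre, and in particular there is no apparent way to ensure the centre avoids the nodes of the chain (which lie on the central $\mathbb{P}^1_k$), so the fibres of $\phi$ over $\mathbb{P}^1_k\times\mathcal{S}_0$ need not be single points; and (b) the claim that $\mathcal{Y}_0 \cap \phi^{-1}(\mathbb{P}^1_k\times\mathcal{S}_0)$ is proper over $\mathcal{S}_0$ is asserted but not proved --- the properness hypothesis is about $\ov{\mathcal{W}_0}$, which is only a closed subset of $\mathcal{Y}_0$, and there is no argument given identifying it (or anything proper) with $\mathcal{Y}_0 \cap \phi^{-1}(\mathbb{P}^1_k\times\mathcal{S}_0)$. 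In contrast, the paper sidesteps all of this by never requiring the formal model to be semistable: properness of $\ov U_0 \to S_0$ is used only to guarantee that $\ov U_0$ stays inside $Y_0$, and then the quantitative estimate $r^\le < 1$ comes from the maximum principle on the quasi-compact set $Z^\le$, not from scheme-theoretic analysis of the special fibre.
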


\begin{proof}
Note that $S$, $U := A(1) \times S$, $X := A(r) \times S$, $Y$ are quasi-compact, quasi-separated rigid spaces.
Hence we may choose admissible formal $R$-models $S_\formal$, $U_\formal$, $X_\formal$, $Y_\formal$ for $S$, $U$, $X$, $S$ respectively.
We may choose these models such that $U_\formal$ is an open formal subscheme of~$Y_\formal$, and $Y_\formal$ is an open formal subscheme of $X_\formal$.
Since $U \Subset_S Y$, the schematic closure $\ov U_0$ of $U_0$ in $Y_0$ is proper over~$S_0$.

Let $\spmap \colon X \to X_0$ denote the specialisation map.

Let $Z_0 = X_0 \setminus \ov U_0$, which is an open subscheme of~$X_0$.
Let $Z = \spmap^{-1}(Z_0)$, which is an open subset of~$X$.
Let
\[ Z^\leq = \{ (x,s) \in Z : \abs{x} \leq 1 \}, \quad Z^\geq = \{ (x,s) \in Z : \abs{x} \geq 1 \}. \]
Let
\[ r^\leq = \sup \{ \abs{x} : (x,s) \in Z^\leq \}, \quad r^\geq = \sup \{ \abs{x}^{-1} : (x,s) \in Z^\geq \}. \]
(If $Z^\leq$ or $Z^\geq$ empty, set $r^\leq=r$ or $r^\geq=r$ respectively.)

We claim that $r^\leq < 1$.
If $Z^\leq$ is empty, then this is immediate.
Otherwise, assume that $Z^\leq$ is non-empty.
Since $Z_0$ is quasi-compact, so is~$Z$.
Since $X$ is quasi-separated, it follows that $Z^\leq$ is quasi-compact.
Hence, by the maximum principle, the absolute value of the first co-ordinate $\abs{x}$ attains its supremum~$r^\leq$ on~$Z^\leq$.
But $Z$ is disjoint from $U = A(1) \times S$, so $\abs{x} \neq 1$ on $Z^\leq$.
Hence $r^\leq < 1$.

Similarly, we obtain $r^\geq < 1$.

Choose $r'$ such that $\max\{ r^\leq, r^\geq \} < r' < 1$.
We claim that $A^\leq(r') \times S \subset Y$.
Indeed, if $(x,s) \in A^\leq(r') \times S$, then $r^\leq < r' \leq \abs{x}$.
Hence by the definition of~$r^\leq$, $(x,s) \not \in Z^\leq$.
Since $\abs{x} \leq 1$, it follows that $(x,s) \not \in Z$.
Hence
\[ \spmap((x,s)) \in X_0 \setminus Z_0 = \ov U_0 \subset Y_0. \]
Since $Y_\formal$ is an open formal subscheme of~$X_\formal$, it follows that $(x,s) \in Y$, as required.

A similar argument shows that $A^\geq(r') \times S \subset Y$.
Thus $A(r') \times S \subset Y$, completing the proof of the lemma.
\end{proof}

\subsection{Approximation theorem}

The approximation theorem asserts that, under suitable conditions, a morphism from an open subspace of a rigid space~$Z$ to a rigid space~$X$, with relatively compact image, may be approximated by a morphism from a larger open subspace of~$Z$ to~$X$.
A slightly weaker version of the theorem is found at \cite[Thm.~3.6.7]{Lut16}.

In the following theorem statement, $S$ denotes Raynaud's rigid generic fibre of $S_\formal$, etc.
Fix $\pi \in K^\times$ with $\abs{\pi} < 1$.

\begin{theorem} \label{approximation:formal} \cite[Thm.~7.4]{Lut95}
Let $S_\formal$ be an affine admissible formal $R$-scheme.
Let $X_\formal$ be a quasi-compact separated admissible formal $S_\formal$-scheme such that the induced rigid morphism $X \to S$ is smooth.

Let $Z_\formal$ be a quasi-compact admissible formal $S_\formal$-scheme and let $U_\formal$ be an open formal subscheme of $Z_\formal$.
Suppose that $Z$ is affinoid and $U$ is a Weierstrass domain in~$Z$.

Let $\phi_\formal \colon U_\formal \to X_\formal$ be a morphism of formal schemes over $S_\formal$, such that $\phi(U) \Subset_S X$.

Let $\lambda \in \ZZ_{\geq 1}$.
Then there exists an admissible formal blowing-up $\psi_\formal \colon Z'_\formal \to Z_\formal$, which is finite over~$U_\formal$, and an open formal subscheme $U'_\formal \subset Z'_\formal$, such that:
\begin{enumerate}[(a)]
\item the schematic closure of $(Z'_\formal \times_{Z_\formal} U_\formal)_0$ (in $Z'_0$) is contained in $U'_0$;
\item the schematic closure $\ov U'_0$ (in $Z'_0$) is proper over $\ov U_0$ (the closure of~$U_0$ in~$Z_0$);
\item there exists an morphism $\phi_\formal' \colon U'_\formal \to X_\formal$ such that 
\[ \phi_\formal'|_{Z'_\formal \times_{Z_\formal} U_\formal} \equiv \phi_\formal \circ \psi_\formal|_{Z'_\formal \times_{Z_\formal} U_\formal} \bmod \pi^\lambda. \]
\end{enumerate}
\end{theorem}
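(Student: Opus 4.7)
The plan is to reduce to approximation in affine coordinates, using smoothness of $X \to S$ to transport the approximation back to $X_\formal$. First, since $\phi(U) \Subset_S X$ and $X_\formal$ is quasi-compact, cover the closure of $\phi(U)$ in~$X_\formal$ by finitely many affine formal open subschemes of $X_\formal$. After a preliminary admissible formal blow-up of~$Z_\formal$ refining the induced decomposition of $U_\formal$, we may assume that $\phi_\formal$ factors through a single affine formal open $X'_\formal \subset X_\formal$; smoothness of $X \to S$ then permits, after possibly further shrinking~$X'_\formal$, a factorisation of $X'_\formal \to S_\formal$ as an \'etale morphism $X'_\formal \to \widehat{\AAA}^d_{S_\formal}$ followed by the canonical projection.

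Second, composing $\phi_\formal$ with this \'etale chart yields a morphism $U_\formal \to \widehat{\AAA}^d_{S_\formal}$ given by $d$ functions $f_1, \ldots, f_d \in \cO^\circ(U_\formal)$. Because $U$ is a Weierstrass domain in the affinoid~$Z$, the restriction map $\cO^\circ(Z_\formal) \to \cO^\circ(U_\formal)$ has $\pi$-adically dense image, so one may choose $g_i \in \cO^\circ(Z_\formal)$ with $g_i \equiv f_i$ modulo a sufficiently high power of~$\pi$. These $g_i$ assemble into a morphism $\chi_\formal \colon Z_\formal \to \widehat{\AAA}^d_{S_\formal}$ approximating $\phi_\formal$ composed with the \'etale chart; a Hensel-style lifting, justified by formal \'etaleness of $X'_\formal \to \widehat{\AAA}^d_{S_\formal}$, then propagates the given datum on~$U_\formal$ to a morphism $\phi'_\formal \colon U'_\formal \to X_\formal$ congruent to~$\phi_\formal$ modulo~$\pi^\lambda$, where $U'_\formal$ sits inside a suitable admissible blow-up $\psi_\formal \colon Z'_\formal \to Z_\formal$.

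The hardest step is the precise construction of $\psi_\formal$ and~$U'_\formal$: one must simultaneously arrange condition~(a), that the strict transform of $U_\formal$ in~$Z'_0$ lies inside~$U'_0$, and condition~(b), that $\ov U'_0$ is proper over $\ov U_0$. In other words, $U'$ must \emph{thicken}~$U$ just enough to contain the lifted morphism's natural domain while not spreading too far into the boundary of~$U$ inside~$Z$. Achieving this balance alongside the \'etale-lifting step requires a careful choice of the blow-up ideal, adapted both to~$\chi_\formal$ and to the \'etale chart $X'_\formal \to \widehat{\AAA}^d_{S_\formal}$, and this is the technical heart of L\"utkebohmert's proof.
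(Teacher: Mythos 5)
The paper does not prove this theorem: it is imported verbatim from \cite[Thm.~7.4]{Lut95} and used as a black box (the paper's own contribution near this statement is \cref{approximation:extra}, which extracts extra information from L\"utkebohmert's proof, not the approximation theorem itself). So there is no in-paper proof to compare your attempt against. Evaluating your sketch on its own terms, there is a genuine gap and at least one step that does not work as stated.

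The claimed reduction to a single affine chart is incorrect. An admissible formal blow-up $Z'_\formal \to Z_\formal$ induces an isomorphism on rigid generic fibres, so it cannot change the image $\phi(U) \subset X$, which may genuinely require several affine opens of $X_\formal$ to cover. What the argument actually requires is a finite decomposition of $U_\formal$ into pieces each mapping into an affine chart of $X_\formal$, a local approximation on each piece, and then a gluing step controlling the discrepancies between the local approximations on overlaps (this is where the separatedness of $X_\formal \to S_\formal$, the Weierstrass structure of $U$ in $Z$, and the choice of blow-up ideal actually do the work). Collapsing this to ``we may assume a single affine'' loses the entire gluing problem. A related issue is the appeal to ``formal \'etaleness of $X'_\formal \to \widehat{\AAA}^d_{S_\formal}$'': smoothness of the rigid morphism $X \to S$ gives \'etale charts on the generic fibre, but a chosen formal model $X'_\formal \to \widehat{\AAA}^d_{S_\formal}$ need not be \'etale, and producing formal charts good enough for the Hensel-style lifting is itself a nontrivial point in L\"utkebohmert's argument.

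Finally, you explicitly flag conditions~(a) and~(b) as ``the technical heart'' but do not supply the argument. These are not cosmetic: condition~(b), the properness of $\ov U'_0$ over $\ov U_0$, is exactly what feeds into \cref{approximation:extra}(e) to give $U \Subset_Z U'$, which is the form of the theorem the rest of the paper uses. Hensel lifting by itself gives no control over the special-fibre closure of the lifted domain; that control is precisely what the careful construction of the blow-up $\psi_\formal$ is for, and a proof that omits it has not proved the theorem.
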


The following additional information can be obtained from \cite[Thm.~7.4]{Lut95} and its proof.
The analogue of~(d) is stated in \cite[Satz~5.2.1]{Mar08} and \cite[Thm.~6.3]{Lut09}, which give another version of the approximation theorem under the additional hypothesis $U \Subset Z$ (this additional hypothesis is not satisfied in our application).

\begin{proposition} \label{approximation:extra}
In the setting of \cref{approximation:formal}, the following additional properties hold:
\begin{enumerate}[(a)]
\setcounter{enumi}{3}
\item $\phi_\formal'$ is a morphism of formal $S_\formal$-schemes;
\item $U \Subset_Z U'$.
\end{enumerate}
\end{proposition}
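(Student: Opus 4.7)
My plan splits into two largely independent pieces: (e) should follow almost mechanically from properties (a) and (b) of \cref{approximation:formal} together with the fact that admissible formal blowups are isomorphisms on rigid generic fibres, while (d) requires revisiting the construction of $\phi_\formal'$ inside the proof of \cite[Thm.~7.4]{Lut95}.

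To establish (e), I would take $U'_\formal$ as the formal model of $U' \to Z$, with structural map $U'_\formal \hookrightarrow Z'_\formal \xrightarrow{\psi_\formal} Z_\formal$. Since $\psi_\formal$ is an admissible formal blowup, it induces an isomorphism $Z' \xrightarrow{\sim} Z$ on rigid generic fibres, under which $U \subset Z$ corresponds to the rigid generic fibre of $Z'_\formal \times_{Z_\formal} U_\formal$. Property~(a) gives $(Z'_\formal \times_{Z_\formal} U_\formal)_0 \subset U'_0$, hence $Z'_\formal \times_{Z_\formal} U_\formal \subset U'_\formal$ as open formal subschemes, so we obtain an inclusion $U \subset U'$ together with an open formal subscheme of $U'_\formal$ modelling it. For the properness condition, the schematic closure of $(Z'_\formal \times_{Z_\formal} U_\formal)_0$ in $U'_0$ coincides with its closure in $Z'_0$ by (a) and therefore lies in $\ov U'_0$; property~(b) then says $\ov U'_0 \to \ov U_0$ is proper, and composing with the closed immersion $\ov U_0 \hookrightarrow Z_0$ yields properness over $Z_0$. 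This gives $U \Subset_Z U'$.

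For (d), the approximate morphism $\phi'_\formal$ in the proof of \cite[Thm.~7.4]{Lut95} is built by a formal lifting argument that exploits the relative smoothness of $X_\formal$ over $S_\formal$; since the smoothness invoked is relative to $S_\formal$ and the starting morphism $\phi_\formal$ is itself an $S_\formal$-morphism, each lifting step can be carried out in the category of formal $S_\formal$-schemes and the resulting $\phi'_\formal$ is automatically an $S_\formal$-morphism. I expect the main difficulty to lie here: the argument requires carefully tracing the $S_\formal$-compatibility through the local chart choices, power-series approximations, and gluing steps in Lütkebohmert's construction, whereas (e) is a clean formal consequence of data already listed in \cref{approximation:formal}.
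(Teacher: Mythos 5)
Your proposal is correct and follows essentially the same route as the paper: for (e) you use \cref{approximation:formal}(a) to see that $Z'_\formal \times_{Z_\formal} U_\formal$ is an open formal subscheme of $U'_\formal$ modelling $U$ and that its special-fibre closure lies in $\ov U'_0$, then combine (b) with the closed immersion $\ov U_0 \hookrightarrow Z_0$ to get properness over $Z_0$, exactly as in the paper. For (d) the paper likewise just appeals to an inspection of the construction in the proof of \cite[Thm.~7.4]{Lut95}, so there is no substantive difference.
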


\begin{proof}
Claim~(d) may be seen by examining the proof of \cite[Thm.~7.4]{Lut95}.
    
For claim~(e), note that from \cref{approximation:formal}(a), $U_\formal^* := Z'_\formal \times_{Z_\formal} U_\formal$ is an open formal subscheme of~$U'_\formal$.
Also from (a), the schematic closure $\ov U^*_0$ of $U^*_0$ in $U'_0$ is equal to the schematic closure of $U^*_0$ in $\ov U'_0$.
Hence by~(b), $\ov U^*_0$ is proper over~$\ov U_0$.
Since $\ov U_0 \to Z_0$ is a closed immersion, it follows that $\ov U^*_0$ is proper over~$Z_0$ and, hence, $U \Subset_Z U'$.
\end{proof}

\section{Group laws on relative rigid balls} \label{sec:group-laws-balls}

In this section, we consider rigid $S$-groups whose underlying rigid space is either an $S$-ball (either closed or open).  The group law for such a group is a $g$-dimensional formal group law with coefficients in $\cO^\circ(S)$.  For these groups, we prove several elementary lemmas describing how the group law is approximated by addition in the ball.

Fix $\pi \in K^\times$ with $\abs{\pi} < 1$.
Let $\bar R$ denote the ring of integers of~$\bar K$.

Let $S$ be a reduced affinoid space.
Let $U$ be a rigid $S$-group such that the underlying rigid $S$-space is isomorphic to either the relative closed unit ball $D^g \times S$ or the relative open unit ball $D_+^g \times S$.
Fix an isomorphism of rigid $S$-spaces $\zeta \colon U \to D^g \times S$ or $U \to D_+^g \times S$, such that $\zeta$ composed with the identity section $S \to U$ is equal to the zero section $S \to D^g \times S$ (such an isomorphism always exists, as we may take an arbitrary isomorphism $U \to D^g \times S$ or $U \to D_+^g \times S$ and translate it).
For $i=1, \dotsc, g$, we write $p_i \colon D^g \times S \to D$ or $D_+^g \times S \to D_+$ for the projection onto the $i$-th factor, and we write $\zeta_i = p_i \circ \zeta \in \cO^\circ(U)$.

Via $\zeta$, the $S$-group law $U \mathbin{\times_S} U \to U$ can be described by a $g$-tuple of power series
\[ F = (F_1, \dotsc, F_g) \in \powerseries{\cO(S)}{X_1, \dotsc, X_g, Y_1, \dotsc, Y_g}^g. \]
Likewise the inverse $U \to U$ can be described by a $g$-tuple of power series
\[ i \in \powerseries{\cO(S)}{X_1, \dotsc, X_g}^g. \]

The following lemma implies that $F$ and~$i$ have coefficients in $\cO^\circ(S)$.
This is obvious in the case $U \cong D^g \times S$; indeed, in this case, the components of~$F$ are in $\cO^\circ(D^g \times D^g \times S) = \tatealgebra{\cO^\circ(S)}{X_1, \dotsc, X_g, Y_1, \dotsc, Y_g}$, and similarly for~$i$.
The lemma shows that it is also true for $U \cong D_+^g \times S$.

\begin{lemma}
Every holomorphic function $f \colon D_+^m \times S \to D$ is represented by a power series in $\powerseries{\cO^\circ(S)}{X_1, \dotsc, X_m}$.
\end{lemma}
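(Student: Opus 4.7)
The plan is to expand $f$ into a Taylor series on closed sub-polydiscs of $D_+^m \times S$ of radius $r < 1$ and to let $r \to 1$, using reducedness of~$S$ to convert the value bound $\abs{f}_{D_+^m \times S} \leq 1$ into a coefficient bound.

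For each $r \in (0,1)$, the ring of holomorphic functions on $D(r)^m \times S$ is the Tate algebra $\tatealgebra{\cO(S)}{X_1/r, \dotsc, X_m/r}$ of series $\sum_I a_I X^I$ with $a_I \in \cO(S)$ and $\abs{a_I}_S r^{\abs{I}} \to 0$.  (When $r \notin \abs{K^\times}$ one passes to a finite extension $K'/K$ with $r \in \abs{K'^\times}$ so that $D(r)^m \times (S \times_K K')$ is strictly affinoid; the coefficients are unchanged by Galois descent.)  Hence $f|_{D(r)^m \times S}$ has a unique such expansion $\sum_I a_I(r) X^I$, and by uniqueness on nested polydiscs the coefficients $a_I(r)$ are independent of~$r$.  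This yields a single formal series $F = \sum_I a_I X^I \in \powerseries{\cO(S)}{X_1, \dotsc, X_m}$ representing~$f$ on all of $D_+^m \times S$.

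To bound~$\abs{a_I}_S$, use that the sup norm on a reduced strictly affinoid algebra equals the Gauss norm on its Tate algebra \cite[\S 6.2.4]{BGR84}.  Applied to the reduced affinoid $D(r)^m \times S$ (after base change to~$K'$ if necessary, which preserves sup norms because $S$ is reduced), this gives $\sup_I \abs{a_I}_S r^{\abs{I}} = \abs{f}_{D(r)^m \times S} \leq 1$, the final inequality holding because $f$ maps into~$D$.  Letting $r \to 1$ through a sequence in $\abs{\bar K^\times}$ yields $\abs{a_I}_S \leq 1$, so $a_I \in \cO^\circ(S)$, as required.  The main technical ingredient here is the sup-norm/Gauss-norm equality for reduced strictly affinoids; the base change to finite extensions needed to handle non-rational radii~$r$ is routine.
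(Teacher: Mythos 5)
Your proposal is correct and is essentially the paper's own argument: expand $f$ on closed sub-polydiscs of radius tending to $1$, use the identity between the sup norm on $D(r)^m \times S$ and the weighted Gauss norm of the coefficients over $S$, and pass to the limit to get $\abs{a_I}_S \leq 1$. The only cosmetic difference is that the paper works with the radii $\epsilon^{1/n}$ (realised as rational subdomains of $D$ over $K$), which avoids your detour through finite extensions of~$K$.
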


\begin{proof}
The restriction of $f$ to $D(\epsilon)^m \times S$ is represented by an element of 
\[ \cO(D(\epsilon)^m \times S) = \tatealgebra{\cO(S)}{\pi^{-1}X_1, \dotsc, \pi^{-1}X_m} \subset \powerseries{\cO(S)}{X_1, \dotsc, X_m}. \]
Thus, we may write
\[ f = \sum_{j \in \ZZ_{\geq0}^m} f_j \underline X^j, \]
where $f_j \in \cO(S)$.

For every $n \in \ZZ_{>0}$, $f$ maps $D(\epsilon^{1/n})^m \times S$ into~$D$, so we have
\[ 1 \geq \abs{f}_{D(\epsilon^{1/n})^m \times S} = \max \bigl\{ \epsilon^{\abs{j}/n}\abs{f_j}_S : j \in \ZZ_{\geq0}^m \bigr\}. \]
Hence, for each~$j \in \ZZ_{\geq0}^n$ and each $n \in \ZZ_{>0}$, we have $\abs{f_j}_S \leq \epsilon^{-\abs{j}/n}$.
If we fix $j$ and let $n \to \infty$, then $\epsilon^{-\abs{j}/n} \to 1$.
Hence $\abs{f_j}_S \leq 1$, that is, $f_j \in \cO^\circ(S)$, for all $j \in \ZZ_{\geq0}^n$.
\end{proof}

Since $U$ is an $S$-group and $\zeta$ maps the identity section of~$U$ to the zero section, $F$ is a $g$-dimensional formal group law over $\cO^\circ(S)$.
In other words,
\begin{enumerate}
\item $F(\underline X, \underline 0) = \underline X$ and $F(\underline 0, \underline Y) = \underline Y$ (identity section of~$U$ maps to $\underline 0$);
\item $F(\underline X, F(\underline Y, \underline Z)) = F(F(\underline X, \underline Y), \underline Z)$ (associativity);
\item $F(\underline X, i(\underline X)) = F(i(\underline X), \underline X) = \underline 0$ (inverse).
\end{enumerate}

\begin{notation} \label{notation:congruence}
Let $\lambda \in \ZZ_{>1}$ and let $T$ be a reduced affinoid space with a morphism $T \to S$.

If $x,y$ are $S$-morphisms $T \to D^g \times S$ or $T \to D_+^g \times S$, write $x \equiv y \bmod \pi^\lambda$ to mean that $\abs{p_i \circ x -  p_i \circ y}_T \leq \epsilon^\lambda$ for all~$i=1,\dotsc,g$.
Equivalently, $x \equiv y \bmod \pi^\lambda$ means that $p_i(x(t)) \equiv p_i(y(t)) \bmod \pi^\lambda$ for all $t \in T(\bar K)$.

If $u,v$ are $S$-morphisms $T \to U$, write $u \equiv v \bmod \pi^\lambda$ to mean that $\zeta \circ u \equiv \zeta \circ v \bmod \pi^\lambda$ in the sense defined above.
\end{notation}

\begin{notation} \label{notation:Ur}
For $r \in \abs{\ov K^\times}$ with $0<r<1$, write
\[ U[r] = \{ u \in U : \abs{\zeta_i(u)} \leq r \text{ for all } i=1, \dotsc, g \}. \]
\end{notation}

Note that the notations $u \equiv v$ and $U[r]$ depend implicitly on the isomorphism~$\zeta$.

\begin{lemma} \label{formal-group-congruence-piU}
Let $\lambda \in \ZZ_{>0}$.
Let $T$ be a reduced affinoid $S$-space and let $u,v$ be $S$-morphisms $T \to U$.
We have $u \equiv v \bmod \pi^\lambda$ if and only if the image of $u \cdot v^{-1}$ is contained in $U[\epsilon^\lambda]$.
\end{lemma}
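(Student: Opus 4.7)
The plan is to translate both sides of the equivalence into norm estimates on $g$-tuples in $\cO^\circ(T)^g$ obtained by applying $\zeta$, and then to exploit the ultrametric inequality together with the fact (established in the preceding lemma) that $F$ and $i$ have coefficients in $\cO^\circ(S)$. Let $w = u \cdot v^{-1}$ and write $a = \zeta(w)$, $b = \zeta(v)$, $x = \zeta(u)$. The identities $u = w \cdot v$ and $w = u \cdot v^{-1}$ in $U(T)$ give $x = F(a, b)$ and $a = F(x, i(b))$, while the inverse axiom $F(\underline Y, i(\underline Y)) = \underline 0$ specialises to $F(b, i(b)) = \underline 0$. All coordinates involved have sup-norm at most $1$ on~$T$.

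For the $(\Leftarrow)$ direction, suppose $|a_k|_T \leq \epsilon^\lambda$ for every~$k$. The axiom $F(\underline 0, \underline Y) = \underline Y$ says that the power series $F_j(A, Y) - Y_j \in \powerseries{\cO^\circ(S)}{A, Y}$ vanishes at $A = \underline 0$, so each of its monomials contains at least one $A_k$. Evaluating at $A = a$, $Y = b$, and using that the coefficients have sup-norm $\leq 1$ and that $|b_k|_T \leq 1$, the non-archimedean triangle inequality yields $|\zeta_j(u) - \zeta_j(v)|_T = |F_j(a, b) - b_j|_T \leq \epsilon^\lambda$, so $u \equiv v \bmod \pi^\lambda$.

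For the $(\Rightarrow)$ direction, set $d = x - b$, so by hypothesis $|d_k|_T \leq \epsilon^\lambda$ for every~$k$. Consider the power series $Q_j(Y, Z, E) = F_j(Y + E, Z) - F_j(Y, Z) \in \powerseries{\cO^\circ(S)}{Y, Z, E}$; since $Q_j$ vanishes at $E = \underline 0$, every one of its monomials contains at least one $E_k$-factor. Evaluating at $Y = b$, $Z = i(b)$, $E = d$, and using $|b_k|_T, |i(b)_k|_T \leq 1$, $|d_k|_T \leq \epsilon^\lambda$, and coefficient sup-norms $\leq 1$, the ultrametric inequality yields $|F_j(x, i(b)) - F_j(b, i(b))|_T = |Q_j(b, i(b), d)|_T \leq \epsilon^\lambda$. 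Combined with $F_j(b, i(b)) = 0$ and $F_j(x, i(b)) = a_j$, this gives $|a_j|_T \leq \epsilon^\lambda$ for every~$j$, placing the image of $w$ in~$U[\epsilon^\lambda]$. No serious obstacle arises: the entire argument is the elementary observation that a formal group law agrees with coordinate-wise addition modulo mixed higher-order terms, applied over the integral base ring $\cO^\circ(S)$.
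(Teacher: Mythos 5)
Your proof is correct and follows essentially the same route as the paper: both directions reduce to the observation that power series with coefficients in $\cO^\circ(S)$ preserve congruences modulo $\pi^\lambda$ on integral points, combined with the identity axiom (for showing $F(\underline 0,\underline y)=\underline y$ forces $u\equiv v$) and the inverse axiom (for showing $F(\underline x,i(\underline x))=\underline 0$ forces the image of $u\cdot v^{-1}$ into $U[\epsilon^\lambda]$). The only cosmetic differences are that you work with sup-norms on $T$ rather than pointwise at $\bar K$-points and perturb the first argument of $F$ where the paper perturbs the second.
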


\begin{proof}
Let $w = u \cdot v^{-1} \colon T \to U$.
Consider $t \in T(\bar K)$ and write
\[ \zeta(u(t)) = (\underline x, s), \quad \zeta(v(t)) = (\underline y, s), \quad \zeta(w(t)) = (\underline z, s), \]
where $\underline x, \underline y, \underline z \in \bar R^g$ and $s \in S(\bar K)$.

For the forwards implication, if $u \equiv v \bmod \pi^\lambda$, then we have $\underline x \equiv \underline y \bmod \pi^\lambda$ in $\bar R^g$.
Since the power series $F_1, \dotsc, F_g, i_1, \dotsc, i_g$ have coefficients in $\cO^\circ(S)$, this implies that
\[ F(\underline x, i(\underline y, s), s) \equiv F(\underline x, i(\underline x, s), s) \bmod \pi^\lambda. \]
By the definition of~$F$ and~$i$, and the axiom for the inverse of a formal group law, we can now calculate
\[ \underline z = F(\underline x, i(\underline y, s), s)  \equiv F(\underline x, i(\underline x, s), s) = \underline 0 \bmod \pi^\lambda. \]
Since the above holds for all $t \in T(\bar K)$, we obtain $\abs{\zeta_i \circ w}_X \leq \epsilon^\lambda$ for each~$i=1, \dotsc, g$.
This means that $u \cdot v^{-1}$ factors through $U[\epsilon^\lambda]$, as required.

For the converse implication, if the image of $u \cdot v^{-1}$ is contained in $U[\epsilon^\lambda]$, then we have $\abs{\zeta_i(w(t))} \leq \epsilon^\lambda$ for each~$i$, so $\underline z \equiv \underline 0 \bmod \pi^\lambda$.
Hence
\[ \underline x = F(\underline z, \underline y) \equiv F(\underline 0, \underline y) = \underline y \bmod \pi^\lambda, \]
where the first equality follows from $u = w \cdot v$ and the last equality from the identity axiom for a formal group law.
Again, since this holds for all $t \in T(\bar K)$, we obtain that $u \equiv v \bmod \pi^\lambda$.
\end{proof}

\begin{corollary} \label{formal-group-congruence-subgroup}
$U[\epsilon^\lambda]$ is an $S$-subgroup of~$U$.
\end{corollary}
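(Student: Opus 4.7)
The plan is to verify the three subgroup axioms for $U[\epsilon^\lambda]$ directly from inspection of the power series $F$ and $i$ describing the group law, using the key fact — established just before the statement — that these series have coefficients in $\cO^\circ(S)$.

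First, for the identity: by our choice of $\zeta$, the identity section $e \colon S \to U$ satisfies $\zeta \circ e = \underline 0$, so $e$ factors through $U[\epsilon^\lambda]$ trivially.

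Second, for closure under multiplication: it suffices to show that, taking $T = U[\epsilon^\lambda] \times_S U[\epsilon^\lambda]$ with projections $u, v \colon T \to U$, the product $u \cdot v$ factors through $U[\epsilon^\lambda]$. On a $\bar K$-point $t$ with $\zeta \circ u (t) = (\underline x, s)$ and $\zeta \circ v(t) = (\underline y, s)$ we have $\abs{x_i}, \abs{y_j} \leq \epsilon^\lambda$, and $\zeta(u(t)\cdot v(t)) = (F(\underline x, \underline y, s), s)$. The identity axiom $F(\underline 0, \underline 0) = \underline 0$ means each component $F_k$ has no constant term, hence is a sum of monomials each involving at least one $X_i$ or $Y_j$; since the coefficients lie in $\cO^\circ(S)$, such a monomial evaluated at $(\underline x, \underline y, s)$ has absolute value $\leq \epsilon^\lambda$. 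Hence $\abs{F_k(\underline x, \underline y, s)} \leq \epsilon^\lambda$, so $u \cdot v$ factors through $U[\epsilon^\lambda]$ as required. (Equivalently, one can package this step as saying that $u \equiv e \bmod \pi^\lambda$ and $v \equiv e \bmod \pi^\lambda$ imply $u\cdot v \equiv e \bmod \pi^\lambda$, and then invoke \cref{formal-group-congruence-piU} to convert back into a statement about factoring through $U[\epsilon^\lambda]$.)

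Third, for closure under inverses: the inverse axiom gives $i(\underline 0) = \underline 0$, so each component $i_k$ likewise has no constant term. Exactly the same monomial-by-monomial argument shows that the inverse morphism maps $U[\epsilon^\lambda]$ into itself.

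There is no main obstacle: the statement is essentially a direct consequence of the coefficient bound on $F$ and $i$ established in the preceding discussion, together with the identity and inverse axioms for a formal group law.
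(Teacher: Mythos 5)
Your proof is correct. It takes a mild variant of the paper's route: the paper applies \cref{formal-group-congruence-piU} with $u = p_1$ and $v = p_2$ to conclude in one stroke that $p_1 \cdot p_2^{-1}$ maps $U[\epsilon^\lambda] \times_S U[\epsilon^\lambda]$ into $U[\epsilon^\lambda]$ (the ``closed under $a\cdot b^{-1}$'' subgroup criterion), whereas you verify identity, multiplication and inverse separately, re-deriving the coefficient-bound estimate directly from the fact that $F$ and $i$ have no constant term and coefficients in $\cO^\circ(S)$. The underlying insight is the same; the paper's version is shorter because the power-series work is entirely delegated to the already-proved lemma. One small remark on your parenthetical: invoking \cref{formal-group-congruence-piU} to convert $u\cdot v \equiv e$ into factoring through $U[\epsilon^\lambda]$ is circular unless you have an independent argument for the congruence, which is exactly your direct monomial estimate — so that estimate, not the lemma, is doing the work in that variant. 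The cleaner way to use the lemma (as the paper does) is to take $p_1 \equiv p_2 \bmod \pi^\lambda$ as the hypothesis and read off that $p_1 \cdot p_2^{-1}$ factors through $U[\epsilon^\lambda]$.
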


\begin{proof}
Let $p_1,p_2$ denote the morphisms $U[\epsilon^\lambda] \times_S U[\epsilon^\lambda] \to U$ obtained by composing projections onto the factors with the inclusion $U[\epsilon^\lambda] \to U$.
We have $p_1 \equiv \underline 0 \equiv p_2 \bmod \pi^\lambda$.
Applying \cref{formal-group-congruence-piU} with $u = p_1$, $v=p_2$, we deduce that $p_1 \cdot p_2^{-1}$ maps $U[\epsilon^\lambda] \times_S U[\epsilon^\lambda]$ into $U[\epsilon^\lambda]$, as required.
\end{proof}

\begin{lemma} \label{formal-group-F2-congruence}
Let $T$ be a reduced affinoid $S$-space and let $u,v$ be $S$-morphisms $T \to U[\epsilon^\lambda]$.
Then
\[ \zeta(u \cdot v) \equiv \zeta(u) + \zeta(v) \bmod \pi^{2\lambda}. \]
\end{lemma}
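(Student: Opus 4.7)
The plan is to use the identity axioms for the formal group law $F$ to decompose it into a linear part plus a ``cross term'' that vanishes when either variable is zero, and then exploit the bilinearity-like structure to get the claimed $\pi^{2\lambda}$ congruence from the $\pi^\lambda$ bound on each factor.

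Concretely, first I would write, for each $k = 1, \dotsc, g$,
\[ F_k(\underline X, \underline Y) = X_k + Y_k + G_k(\underline X, \underline Y), \]
where $G_k \in \powerseries{\cO^\circ(S)}{X_1, \dotsc, X_g, Y_1, \dotsc, Y_g}$. The identity axioms $F(\underline X, \underline 0) = \underline X$ and $F(\underline 0, \underline Y) = \underline Y$ force $G_k(\underline X, \underline 0) = 0$ and $G_k(\underline 0, \underline Y) = 0$. Consequently, every monomial $\underline X^{\alpha} \underline Y^{\beta}$ appearing in $G_k$ with nonzero coefficient must satisfy $\abs{\alpha} \geq 1$ and $\abs{\beta} \geq 1$.

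Next I would evaluate on a point $t \in T(\bar K)$. Writing $\zeta(u(t)) = (\underline x, s)$ and $\zeta(v(t)) = (\underline y, s)$ with $\underline x, \underline y \in \bar R^g$ and $s \in S(\bar K)$, the hypothesis that $u, v$ factor through $U[\epsilon^\lambda]$ gives $\abs{x_i}, \abs{y_j} \leq \epsilon^\lambda$ for all $i,j$. Since the coefficients of $G_k$ lie in $\cO^\circ(S)$ and every monomial contributes at least one factor of $\underline x$ and at least one factor of $\underline y$, every term in the convergent expansion of $G_k(\underline x, \underline y, s)$ has absolute value at most $\epsilon^{2\lambda}$; hence $\abs{G_k(\underline x, \underline y, s)} \leq \epsilon^{2\lambda}$. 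Therefore
\[ F_k(\underline x, \underline y, s) = x_k + y_k + G_k(\underline x, \underline y, s) \equiv x_k + y_k \bmod \pi^{2\lambda}. \]

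Since this congruence holds for every $t \in T(\bar K)$ and for each coordinate $k$, translating back via \cref{notation:congruence} yields $\zeta(u \cdot v) \equiv \zeta(u) + \zeta(v) \bmod \pi^{2\lambda}$, as required. There is no real obstacle here: the only slightly delicate point is verifying that the formal sum defining $G_k(\underline x, \underline y, s)$ actually converges $\bar K$-analytically on $U[\epsilon^\lambda] \times_S U[\epsilon^\lambda]$, but this is immediate from the fact that the coefficients of $G_k$ lie in $\cO^\circ(S)$ and $\abs{x_i}, \abs{y_j} < 1$.
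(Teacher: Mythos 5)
Your proof is correct and takes essentially the same approach as the paper: both decompose $F(\underline X,\underline Y)$ as $\underline X + \underline Y$ plus higher-order terms, use the formal-group-law identity axioms to see that every higher-order monomial is divisible by some $X_j$ and some $Y_k$, and then bound each such term by $\epsilon^{2\lambda}$ pointwise. The only cosmetic difference is that you give the remainder a name $G_k$ and spell out why its monomials have positive degree in both variable groups, whereas the paper states this directly.
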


\begin{proof}
Consider $t \in T(\bar K)$ and write
\[ \zeta(u(t)) = (\underline x, s), \quad \zeta(v(t)) = (\underline y, s), \]
where $\underline x, \underline y \in \bar R^g$ and $s \in S(\bar K)$.

From condition~(1) for a formal group law, we obtain that
\[ F(\underline X, \underline Y) = \underline X + \underline Y + \text{higher order terms}, \]
as an equation in $\tatealgebra{\cO^\circ(S)}{\underline X, \underline Y}$, where each higher order term is divisible by at least one $X_j$ variable and by at least one $Y_k$ variable.
Hence, since $\abs{x_j}, \abs{y_k} \leq \epsilon^\lambda$, the higher order terms, evaluated at $(\underline x, \underline y, s)$, all have absolute value at most $\epsilon^{2\lambda}$.
Therefore, $F(\underline x, \underline y, s) \equiv \underline x + \underline y \bmod \pi^{2\lambda}$ for all such $\underline x, \underline y, s$.

Thus
\[ \zeta(u(t) \cdot v(t)) \equiv \zeta(v(t)) + \zeta(v(t)) \bmod \pi^{2\lambda} \]
for all $t \in T(\bar K)$, proving the lemma.
\end{proof}

\section{Main proof} \label{sec:main-proof}

In this section, we prove the main theorem of the paper, \cref{hom-extends-thm}.

For $0 < \epsilon \leq 1$, define $\GG_m(\epsilon)$ to be the following rigid space over~$K$:
\[ \GG_m(\epsilon) = \{ x : \epsilon \leq \abs{x} \leq \epsilon^{-1} \}. \]
Note that $\GG_m(\epsilon) = A(\epsilon)$ in the notation of section~\ref{subsec:thickening}, and $\ov\GG_m = \GG_m(1)$.
However, $\GG_m(\epsilon)$ is not a subgroup of~$\GG_m$ when $\epsilon < 1$.

\subsubsection*{Outline of the proof}

First, note that the uniqueness of $\phi$ holds by the identity principle for rigid morphisms.
As a consequence of the uniqueness, it suffices to construct $\phi$ locally on an admissible covering of~$S$ and glue, so we can and do assume that $S$ is affinoid and that $S_\formal$ is affine.
Likewise, if we construct $\phi$ over a finite extension of~$K$, it will descend to~$K$, so we may freely replace $K$ by finite extensions (and base-change all rigid spaces accordingly) in the course of the proof.

We use an Approximation Theorem of Lütkebohmert (\cref{approximation:formal}) to show that there exists a morphism $\alpha \colon \GG_m(\epsilon^2) \times S \to G$ for some $\epsilon < 1$, which approximates $\bar\phi$ in the sense that $\bar\phi \equiv \alpha|_{\ov\GG_m \times S} \bmod \pi$ relative to the smooth formal model $H_\formal$ given by \cref{hom-extends-thm}(i).
In other words, if we define $\bar u := \alpha|_{\ov\GG_m \times S} \cdot \bar\phi^{-1}$, then $\bar u$ factors through the tube~$T$ of the zero section of $H_\formal$.
Berthelot's weak fibration theorem establishes that, after passing to a suitable admissible covering of~$S$, $T \cong D_g^+ \times S$ as rigid $S$-spaces.

By the maximum principle, the image of $\bar u$ is contained in a closed $S$-ball inside~$T$.  Indeed, by choosing a slightly larger closed $S$-ball~$U \subset T$ and increasing $\epsilon$, we may arrange that the image of $\bar u$ lies inside the closed $S$-ball $U[\epsilon] \Subset_S U$ (defined as in \cref{notation:Ur}).
The calculations from section~\ref{sec:group-laws-balls} show that $U$ and $U[\epsilon]$ are $S$-subgroups of~$T$, hence~$G$.

We now measure the failure of $\alpha \colon \GG_m(\epsilon^2) \times S \to G$ to be an $S$-group homomorphism by introducing the morphism
\[ w_\alpha(x_1, x_2, s) = \alpha(x_1^{-1} x_2^{-1}, s) \alpha(x_1, s) \alpha(x_2, s), \]
which is well-defined as a morphism $\GG_m(\epsilon) \times \GG_m(\epsilon) \times S \to G$.
After increasing $\epsilon$, we arrange that $w_\alpha$ factors through~$U$.
Since $\bar u$ and $w_\alpha$ both factor through the $S$-ball~$U$, we may represent them by power series and use these for calculations.

The heart of the proof is an inductive argument that $\bar u \colon \ov\GG_m \times S \to U$ can be approximated arbitrarily closely by morphisms~$u_\lambda$ which extend to $\GG_m(\epsilon^2) \times S \to U$.
The base case is simply that $\bar u$ factors through~$U[\epsilon]$; in other words, it is approximated by~$0$ to order~$\epsilon$.
For the inductive step, we introduce a morphism $w_\lambda$ analogous to $w_\alpha$, measuring the failure of $\alpha \cdot u_\lambda^{-1}$ to be an $S$-group homomorphism.
Calculations with the Laurent series of~$w_\lambda$ allow us to construct a better approximation $u_{\lambda+1}$ to $\bar u$ which still extends to $\GG_m(\epsilon^2) \times S$.

The facts that $u_\lambda \to \bar u$ in $\cO^\circ(\ov\GG_m \times S)^g$, while the morphisms $u_\lambda$ are defined on $\GG_m(\epsilon^2) \times S$, imply that the power series for $\bar u$ converge on the intermediate annulus $\GG_m(\epsilon) \times S$.
In other words, $\bar u$ extends to a morphism $u \colon \GG_m(\epsilon) \times S \to U$.
It follows that $\ov\phi \colon \ov\GG_m \times S \to G$ extends to a morphism $\psi = \alpha \cdot u^{-1} \colon \GG_m(\epsilon) \times S \to G$, which satisfies the condition to be a group homomorphism whenever it makes sense.
Since $\GG_m$ has an admissible covering by translates of the annulus $\GG_m(\epsilon)$, we can use $\psi$ to define the desired homomorphism $\phi \colon \GG_m \times S \to G$.

\subsection{Approximation}

The proof of \cref{hom-extends-thm} begins with the application of the Approximation Theorem to $\bar\phi$.

Let $\ov\GG_{m,\formal} = \Spf(\tatealgebra{R}{\xi, \xi^{-1}})$ be the standard affine formal model for~$\ov\GG_m$.

\begin{lemma} \label{alpha-approximation}
Let $W = \ov\GG_m \times S$.
In the situation of \cref{hom-extends-thm},
for suitable formal $R$-models $W_\formal$ of~$W$ and $\bar\phi_\formal \colon W_\formal \to H_\formal$ of $\bar\phi \colon W \to H$, there exists an $S_\formal$-morphism $\bar\alpha_\formal \colon W_\formal \to H_\formal$ such that
\begin{enumerate}[(a)]
\item $\bar\alpha \colon W \to H$ (the rigid morphism induced by~$\bar\alpha_\formal$) extends to a rigid morphism $\alpha \colon \GG_m(\epsilon^2) \times S \to G$, for some $\epsilon$ satisfying $0 < \epsilon < 1$;
\item $\bar\alpha_0 = \bar\phi_0 \colon W_0 \to H_0$.
\end{enumerate}
\end{lemma}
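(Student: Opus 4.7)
The plan is to apply Lütkebohmert's Approximation Theorem (\cref{approximation:formal} together with \cref{approximation:extra}) with target the smooth formal model $H_\formal$ of hypothesis~(i), and with the larger annulus $Z := \GG_m(\epsilon_0) \times S$ (for some $\epsilon_0 < 1$) as the ambient formal scheme containing $W = \ov\GG_m \times S$ as a Weierstrass subdomain.

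First I would set up the formal models. Since $S$ is affinoid and $\GG_m(\epsilon_0)$ is a closed annulus, $Z$ is affinoid; the subspace $W \subset Z$ is Weierstrass, being cut out by $\abs{\xi}, \abs{\xi^{-1}} \leq 1$ where $\xi$ is the standard coordinate on $\GG_m(\epsilon_0)$. I would fix an affine formal $R$-model $Z_\formal$ of $Z$ with $W$ induced by an open formal subscheme $W_\formal \subset Z_\formal$, and a formal model $\bar\phi_\formal \colon W_\formal \to H_\formal$ of $\bar\phi$ (which exists after shrinking $W_\formal$ if necessary). To invoke the Approximation Theorem, I must verify $\bar\phi(W) \Subset_S H$: hypothesis~(ii) yields $\bar\phi(W) \Subset_S X$ inside $G$, and since $\bar\phi$ factors through $H$, the image lies in $H \cap X$. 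By passing to a common refinement of the formal models of $H$ and $X$ near $\bar\phi(W)$, while keeping smoothness there (since the blow-up center can be taken outside that neighborhood), I obtain a smooth formal model of an open subspace of $H$ in which $\bar\phi(W)$ is relatively $S$-compact. This refined model replaces $H_\formal$ without violating hypothesis~(i) of the main theorem.

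Next, applying \cref{approximation:formal} and \cref{approximation:extra} with $\lambda = 1$ produces an admissible formal blow-up $\psi_\formal \colon Z'_\formal \to Z_\formal$ (an isomorphism on rigid generic fibers), an open formal subscheme $W'_\formal \subset Z'_\formal$ with rigid generic fiber $W' \supset W$ and $W \Subset_Z W'$, and an $S_\formal$-morphism $\bar\alpha'_\formal \colon W'_\formal \to H_\formal$ satisfying $\bar\alpha'_\formal \equiv \bar\phi_\formal \circ \psi_\formal \bmod \pi$ on $Z'_\formal \times_{Z_\formal} W_\formal$. Taking the formal model $W_\formal$ of the lemma to be $Z'_\formal \times_{Z_\formal} W_\formal$ (still a formal model of $W$, via the blow-up isomorphism on generic fibers) and setting $\bar\alpha_\formal := \bar\alpha'_\formal|_{W_\formal}$, the congruence mod $\pi$ yields the reduction equality $\bar\alpha_0 = \bar\phi_0$, which is~(b).

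For~(a), I would apply \cref{relatively-compact-thickening} to $W' \subset Z$ to conclude that $W' \supset \GG_m(r') \times S$ for some $r' < 1$; setting $\epsilon := \sqrt{r'}$, the restriction $\alpha := \bar\alpha'|_{\GG_m(\epsilon^2) \times S}$ composed with the inclusion $H \subset G$ is the required extension. The main obstacle I anticipate is verifying the hypothesis $\bar\phi(W) \Subset_S H$ of the Approximation Theorem: the given compactness lives in $X \subset G$ with no prescribed formal model of $X$, and must be reconciled with the smooth formal model $H_\formal$. A secondary technical point is ensuring that the formal model of $W'$ produced by the approximation theorem satisfies the $\Subset_S$ hypothesis of \cref{relatively-compact-thickening}, rather than merely the $\Subset_Z$ conclusion of \cref{approximation:extra}(e); this may require a further (admissible) modification of $W'_\formal$ so that the Zariski closure of $W_0$ in the special fiber becomes proper over $S_0$.
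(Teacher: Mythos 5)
Your central move---invoking the Approximation Theorem with target $H_\formal$---does not work, and the obstacle you flag at the end is fatal rather than technical. The hypothesis of \cref{approximation:formal} is $\phi(U)\Subset_S X$ where $X$ is the \emph{target}, so your approach needs $\bar\phi(W)\Subset_S H$. This is not implied by \cref{hom-extends-thm}(ii), which only gives $\bar\phi(W)\Subset_S X$ for some quasi-compact open $X\subset G$, and in fact it can simply fail: take $G=\GG_m\times S$, $H=\ov\GG_m\times S$ with its standard smooth formal model, and $\bar\phi=\id$. Then $\bar\phi(W)=H$ and $H_0\cong\GG_{m,k}\times S_0$ is affine but not proper over $S_0$, so $H\not\Subset_S H$, while hypothesis (ii) holds with $X=\GG_m(\epsilon)\times S$ for any $\epsilon<1$. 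Your proposed fix by ``passing to a common refinement of the formal models of $H$ and $X$ near $\bar\phi(W)$'' cannot repair this: relative $S$-compactness is not something you can manufacture by refining models (\cref{relatively-compact-every-model} says it is model-independent among models inducing $U$), and the issue is that the schematic closure of $\bar\phi(W)_0$ in a model of $X$ has no reason to live inside the locus corresponding to $H$.

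The paper's proof sidesteps this entirely: it applies the Approximation Theorem with target $X_\formal$ (where hypothesis (ii) supplies the needed $\bar\phi(W)\Subset_S X$, and $X\to S$ is smooth because it is open in $G$). The resulting $\alpha'_\formal\colon W'_\formal\to X_\formal$ satisfies $\alpha'_\formal|_{W_\formal}\equiv\bar\phi'_\formal\bmod\pi$; since $\bar\phi$ factors through $X':=X\cap H$, the special fibre of $\alpha'_\formal|_{W_\formal}$ lands in $X'_0$, so $\alpha'_\formal|_{W_\formal}$ factors through the open formal subscheme $X'_\formal$ and can then be composed with a morphism $X'_\formal\to H_\formal$ to yield $\bar\alpha_\formal$. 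This is the crucial idea you are missing: you do not need the approximation's target to be $H_\formal$; you only need the restriction to $W_\formal$ to land there, and the mod-$\pi$ congruence forces that. Your secondary concern (upgrading $W\Subset_Z W'$ to $W\Subset_S W'$ before invoking \cref{relatively-compact-thickening}) is legitimate, and the paper handles it by noting $W\Subset_S Z$ and citing \cite[Remark~3.6.2(b)]{Lut16} for the transitivity-type implication; your suggestion of ``a further admissible modification of $W'_\formal$'' is not the right mechanism.
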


\begin{proof}
Let $Z = \GG_m(\abs{\pi}) \times S$, so that $W$ is an open subspace of~$Z$.
Let $X' = X \cap H$, where $X$ is the subspace of~$G$ which appears in \cref{hom-extends-thm}(ii).
By hypotheses (i) and~(ii) of \cref{hom-extends-thm}, $\bar\phi \colon W \to G$ factors through~$X'$.

We construct formal models for $H$, $W$, $X$, $X'$, $Z$ and $\bar\phi$ as follows:
\begin{enumerate}[(i)]
\item By \cite[Cor.~5.9 and Lemma~5.6]{BL93II}, there exist formal models $W_\formal$, $X_\formal$ and $Z_\formal$ of $W$, $X$ and~$Z$ respectively, together with morphisms $X_\formal \to S_\formal$ and $Z_\formal \to S_\formal$ inducing the structure morphisms $W \to S$, $X \to S$ and $Z \to S$.
\item By \cite[Lemma~5.6]{BL93II}, after replacing $X'_\formal$ by an admissible formal blowing-up, there exists a morphism $X'_\formal \to H_\formal$ inducing the inclusion $X' \to H$.
\item By \cite[Cor.~5.4(b)]{BL93II}, after replacing $X_\formal$ and $X'_\formal$ by admissible formal blowing-ups, there exists an open immersion $X'_\formal \to X_\formal$ inducing the inclusion $X' \to X$.
\item By \cite[Lemma~5.6]{BL93II}, after replacing $W_\formal$ by an admissible formal blowing-up,
there exists a morphism $\bar\phi_\formal' \colon W_\formal \to X'_\formal$ inducing $\bar\phi \colon W \to X'$.
\item By \cite[Cor.~5.4(b)]{BL93II}, after replacing $W_\formal$ and $Z_\formal$ by admissible formal blowing-ups, there exists an open immersion $W_\formal \to Z_\formal$ inducing the inclusion $W \to Z$.
\end{enumerate}

By \cite[Prop.~4.7]{BL93I}, $X_\formal \to S_\formal$ is separated.
By the hypotheses of \cref{hom-extends-thm}, $X \to S$ is smooth.
By construction, $W_\formal$ is an open formal subscheme of $Z_\formal$, $Z$ is affinoid and $W$ is a Weierstrass domain in~$Z$.
By \cite[p.~307, (b)]{BL93I}, $\bar\phi_\formal' \colon W_\formal \to X_\formal$ is a morphism of $S_\formal$-schemes.
According to \cref{hom-extends-thm}(ii), $\bar\phi(W) \Subset_S X$.
Hence the conditions of \cref{approximation:formal} are satisfied by $S_\formal$, $X_\formal$, $Z_\formal$, $U_\formal := W_\formal$ and $\phi_\formal := \bar\phi'_\formal$.

Applying \cref{approximation:formal} and \cref{approximation:extra}, we replace $Z_\formal$ by the resulting admissible formal blowing-up $Z'_\formal$ (and pull back $W_\formal$ so that it remains an open formal subscheme of $Z_\formal$).  We obtain an open formal subscheme $W'_\formal \subset Z_\formal$ and an  $S_\formal$-morphism $\alpha'_\formal \colon W'_\formal \to X_\formal$ with the following properties:
\begin{enumerate}
\item $W \Subset_Z W'$;
\item $\alpha'_\formal|_{W_\formal} \equiv \bar\phi'_\formal \bmod \pi$; in other words, $\alpha'_0|_{W_0} = \bar\phi'_0$ as morphisms of $k$-schemes $W_0 \to X_0$.
\end{enumerate}

Thanks to~(2), $\alpha'_0|_{W_0}$ factors through the open subscheme $X'_0 \subset X_0$.
Hence, $\alpha'_\formal|_{W_\formal}$ factors through the open formal subscheme $X'_\formal \subset X_\formal$.
Let 
\[ \bar\phi_\formal, \bar\alpha_\formal \colon W_\formal \to H_\formal \]
denote the compositions of $\bar\phi'_\formal$ and $\alpha_\formal|_{W_\formal}$ respectively with the morphism $X'_\formal \to H_\formal$.
Then, again as a consequence of~(2), $\bar\alpha_0 = \bar\phi_0$ as morphisms of $k$-schemes $W_0 \to H_0$.
Thus conclusion~(b) of the lemma holds.

By construction, $W \Subset_S Z$.
Hence, by~(1) and by \cite[Remark~3.6.2(b)]{Lut16}, $W \Subset_S W'$.
So, by \cref{relatively-compact-thickening}, $W'$ contains $\GG_m(r') \times S$ for some $r'<1$.
Furthermore, the rigid morphism $\alpha' \colon W' \to X$ induced by $\alpha'_\formal$ extends $\bar\alpha \colon W \to X'$.
Thus conclusion~(a) of the lemma holds with $\epsilon = \sqrt{r'}$.
\end{proof}

\begin{lemma} \label{baru-through-tube}
In the situation of \cref{hom-extends-thm}, there exists an admissible covering $\{ S_i \}$ of~$S$, a real number $\epsilon$ satisfying $0 < \epsilon < 1$, a rigid $S$-morphism $\alpha \colon \GG_m(\epsilon^2) \times S \to G$, and an open $S$-subgroup $T \subset G$ such that:
\begin{enumerate}[(a)]
\item $\bar u := \alpha|_{\ov\GG_m \times S} \cdot \bar\phi^{-1}$ factors through~$T$;
\item for each index~$i$, $T \times_S S_i$ is isomorphic as a rigid $S_i$-space to $D_g^+ \times S_i$.
\end{enumerate}
\end{lemma}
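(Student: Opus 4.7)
The plan is to take the formal models, the $S_\formal$-morphism $\bar\alpha_\formal$, and the rigid extension $\alpha \colon \GG_m(\epsilon^2) \times S \to G$ provided by \cref{alpha-approximation}, and to define $T \subset H$ as the preimage under the specialisation map $\spmap \colon H \to H_0$ of the identity section $e_0(S_0) \subset H_0$. Since $H_\formal \to S_\formal$ is a separated $S_\formal$-group, the identity section $e_\formal$ is a closed immersion, so $e_0(S_0)$ is closed in $H_0$ and consequently $T$ is an open rigid subspace of $H$.

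I would next verify that $T$ is an $S$-subgroup of $H$. The rigid group operations on $H$ are induced by formal $S_\formal$-morphisms on $H_\formal$ that pass to the special fibre, giving a multiplication $m_0$ and an inversion $\iota_0$ on $H_0$ satisfying $m_0 \circ (e_0, e_0) = e_0$ and $\iota_0 \circ e_0 = e_0$. Since specialisation is compatible with the morphisms induced by a given formal $S_\formal$-morphism on generic and special fibres, the rigid multiplication on $H$ sends $T \times_S T$ into $T$ and inversion sends $T$ into $T$. Hence $T$ is an open $S$-subgroup of $H$, and therefore also of $G$.

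For part~(a), conclusion~(b) of \cref{alpha-approximation} gives $\bar\alpha_0 = \bar\phi_0$ as morphisms $W_0 \to H_0$, so the reduction of $\bar u = \bar\alpha \cdot \bar\phi^{-1}$ factors through $e_0(S_0) \subset H_0$. Therefore $\spmap \circ \bar u$ factors through $e_0(S_0)$, which means $\bar u$ factors through $T$.

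For part~(b), I would invoke Berthelot's relative weak fibration theorem \cite[Thm.~1.3.2]{Berthelot}, applied to the smooth formal $S_\formal$-scheme $H_\formal$ together with its identity section. That theorem provides an admissible covering $\{S_i\}$ of $S$ such that, for each $i$, the tube $T \times_S S_i$ is isomorphic as a rigid $S_i$-space to a relative open polydisc $D_g^+ \times S_i$, where $g$ is the relative dimension of $H_\formal$ along the identity section (refining the cover if necessary to ensure this dimension is locally constant). The main obstacle is pinning down the precise form of Berthelot's theorem in the relative setting and checking that it delivers an admissible covering of the base $S$ (rather than of the ambient formal scheme~$H_\formal$); the rest of the argument is bookkeeping that ties together the reduction of $\bar u$, the specialisation map, and the group structure on $T$.
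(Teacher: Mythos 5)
Your proposal is correct and takes essentially the same approach as the paper: $T$ is the tube (specialisation preimage) of the zero section of $H_\formal$, which is an $S$-subgroup because $H_\formal$ is an $S_\formal$-group; part~(a) follows from $\bar\alpha_0 = \bar\phi_0$ via the formal model $\bar u_\formal = \bar\alpha_\formal \cdot \bar\phi_\formal^{-1}$; and part~(b) is Berthelot's weak fibration theorem. Your one hedge is resolved exactly as you expect: the paper applies \cite[Thm.~1.3.2]{Berthelot} with $X = S_0$, $P = S_\formal$, $P' = H_\formal$, so that ${]X[_P} = S$ and ${]X[_{P'}} = T$, and the theorem does deliver an admissible covering of the base~$S$.
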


\begin{proof}
Let $T \subset H$ denote the tube over the image of the zero section $S_0 \to H_0$ (with respect to the formal model $H_\formal$).
According to \cref{hom-extends-thm}(i), $H_\formal$ is an $S_\formal$-group, so $T$ is an $S$-subgroup of~$H$.

Apply the weak fibration theorem \cite[Thm.~1.3.2]{Berthelot} with $X=S_0$, $P=S_\formal$, $P'=H_\formal$, $i=$ the closed immersion $S_0 \to S_\formal$, $i'=$ the zero section $S_0 \to H_0$ composed with the closed immersion $H_0 \to H_\formal$, $u=$ the structure morphism $H_\formal \to S_\formal$ (which is smooth by \cref{hom-extends-thm}(i)).
Following the notation of \cite{Berthelot}, we have ${]X[_P} = S$ and ${]X[_{P'}} = T$.
Hence, by \cite[Thm~1.3.2]{Berthelot}, there exists an admissible covering $\{ S_i \}$ of~$S$ which satisfies conclusion~(b) of the proposition.

Let $\bar\alpha_\formal \colon \ov\GG_{m,\formal} \times S_\formal \to H_\formal$ and $\alpha \colon \GG_m(\epsilon^2) \times S \to G$ be the morphisms given by \cref{alpha-approximation}.
Let
\[ \bar u_\formal = \bar\alpha_\formal \cdot \bar\phi_\formal^{-1} \colon \ov\GG_{m,\formal} \times S_\formal \to H_\formal. \]
By \cref{alpha-approximation}(b), $\bar u_0$ is the zero homomorphism of $S_0$-group schemes $\GG_m \times S_0 \to H_0$.
Hence, the associated rigid morphism $\bar u$ factors through~$T$, proving conclusion~(a) of the proposition.
\end{proof}

\subsection{Properties of \texorpdfstring{$\bar u$}{u-bar}}

From now on, we place ourselves in the situation of \cref{hom-extends-thm}, with $\alpha$, $\bar u$ and $T$ given by \cref{baru-through-tube}.
Using again the fact that it suffices to prove \cref{hom-extends-thm} locally on an admissible covering of~$S$, we may replace $S$ by~$S_i$ and assume that $T \cong D_+^g \times S$ as a rigid $S$-space.
Refining the admissible covering, we may still assume that $S$ is affinoid.

In the following steps, we will increase the parameter $\epsilon$ (while retaining $0 < \epsilon < 1$).
This does not affect the fact from \cref{baru-through-tube}(a) that $\alpha$ is defined on $\GG_m(\epsilon^2)$, because increasing $\epsilon$ makes $\GG_m(\epsilon^2)$ smaller.

We now construct a closed $S$-ball $U \subset G$ such that all the morphisms we need factor through~$U$, and hence can be represented by restricted power series.

\begin{lemma} \label{baru-piU}
After replacing $K$ by a finite extension and increasing $\epsilon$, there exists an open $S$-subgroup $U \subset H$, which is isomorphic as a rigid $S$-space to $D^g \times S$ via an isomorphism $\zeta \colon U \to D^g \times S$, such that $\bar u \colon \ov\GG_m \times S \to H$ factors through
$U[\epsilon]$ (using \cref{notation:Ur}).
\end{lemma}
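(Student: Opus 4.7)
The plan is to bound the image of $\bar u$ strictly inside the relative open ball $T \cong D_+^g \times S$ via the maximum principle, and then rescale a suitable closed subball of~$T$ to identify it with $D^g \times S$ as an $S$-subgroup.

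First, fix an isomorphism $\zeta^T \colon T \xrightarrow{\sim} D_+^g \times S$ as provided by \cref{baru-through-tube}(b), translated if necessary so that it carries the identity section of~$T$ to the zero section. Each component $\zeta_i^T \circ \bar u$ is then a holomorphic function on the affinoid $\ov\GG_m \times S$ with values in the open unit disc~$D_+$; by the maximum modulus principle there exists $r_0 \in [0,1)$ with $\abs{\zeta_i^T \circ \bar u}_{\ov\GG_m \times S} \leq r_0$ for all $i = 1, \dotsc, g$. Since the previously chosen $\epsilon$ satisfies $\epsilon < 1$, the interval $(r_0, r_0/\epsilon) \cap (0,1)$ is non-empty; after replacing $K$ by a sufficiently ramified finite extension, I choose $c \in K^\times$ with $s := \abs{c}$ lying in this interval, and redefine $\epsilon := r_0/s$, which lies strictly between the old value of~$\epsilon$ and~$1$.

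Now I set $U := \{ t \in T : \abs{\zeta_i^T(t)} \leq s \text{ for all } i \}$, a rational subdomain of~$T$, and define $\zeta \colon U \to D^g \times S$ by rescaling: $\zeta_i := c^{-1} \zeta_i^T|_U$. This is an isomorphism of rigid $S$-spaces carrying the identity section of~$T$ (which lies in~$U$) to the zero section. To see that $U$ is an $S$-subgroup of~$T$, I apply the argument in the proof of \cref{formal-group-congruence-subgroup} to the ambient ball~$T$ with the parameter~$s$ in place of $\epsilon^\lambda$; the argument uses only that $s < 1$ and that the group law on~$T$ has the form $F(X,Y) = X+Y+(\text{higher order})$ with coefficients in $\cO^\circ(S)$ (with an analogous expansion for the inverse). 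Hence $U$ is an open $S$-subgroup of~$T$, and so of~$H$. The inequality $\abs{\zeta_i \circ \bar u} = s^{-1}\abs{\zeta_i^T \circ \bar u} \leq r_0/s = \epsilon$ for each~$i$ then shows that $\bar u$ factors through $U[\epsilon]$, as required.

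The main obstacle in this plan is really bookkeeping: coordinating the parameters $r_0$, $s$, and the updated~$\epsilon$ so that $\epsilon$ is genuinely increased while remaining strictly below~$1$ and~$s$ is realised in $\abs{K^\times}$. This is handled by the elementary observation that $\abs{L^\times}$ becomes dense in~$\RR_{>0}$ as $L/K$ ranges over sufficiently ramified finite extensions.
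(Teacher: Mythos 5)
Your proposal follows essentially the same route as the paper's own proof: apply the maximum principle on the affinoid $\ov\GG_m \times S$ to bound the image of $\bar u$ inside a strict subball of~$T$, replace $K$ by a ramified extension so that a suitable radius is realised in $\abs{K^\times}$, take $U$ to be the corresponding closed subball of~$T$, and use the formal-group-law argument of \cref{formal-group-congruence-subgroup} to see that $U$ is an $S$-subgroup. The only differences are in bookkeeping: the paper fixes the new~$\epsilon$ so that $r \leq \epsilon^2$ and takes $U = T[\epsilon]$ (so $U[\epsilon] = T[\epsilon^2]$), whereas you decouple the radius $s$ of~$U$ inside~$T$ from the new~$\epsilon = r_0/s$. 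Two small points worth tightening: you should take $r_0$ to be the sup norm itself (so that $r_0 \in \abs{\ov K^\times}$, as required for $U[\epsilon]$ to be a well-defined affinoid subball under \cref{notation:Ur}), and you should note the trivial degenerate case $r_0 = 0$ separately, since then the interval $(r_0, r_0/\epsilon)$ is empty and one simply chooses any $s \in (0,1) \cap \abs{K^\times}$ with $\epsilon$ redefined as close to~$1$ as desired. Neither issue affects the substance.
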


\begin{proof}
Choose an isomorphism of rigid $S$-spaces $\eta \colon T \to D_+^g \times S$.
By the maximum principle, since $\ov\GG_m \times S$ is affinoid, the image of
\[ \eta \circ \bar u \colon \ov\GG_m \times S \to D_+^g \times S \]
is contained in $D(r)^g \times S$ for some $r<1$.
Replace $K$ by a finite extension and increase $\epsilon$ such that there exists an element $\pi' \in K$ satisfying $\abs{\pi'} = \epsilon$ and $r \leq \epsilon^2 < 1$.

Let $U = T[\epsilon]$.
By \cref{formal-group-congruence-subgroup}, $U$ is an open $S$-subgroup of~$T$ and hence of~$H$.
Since $r \leq \epsilon^2$, $\bar u$ factors through $T[\epsilon^2] = U[\epsilon]$.

Finally, there is an isomorphism $\zeta \colon U \to D^g \times S$ defined by the following composition:
\[ U  \mathrel{\overset{\eta|_U}{\longrightarrow}}  D(\epsilon)^g \times S  \mathrel{\overset{\sim}{\longrightarrow}}  D^g \times S, \]
where the second arrow divides each coordinate by~$\pi'$.
\end{proof}

Note that \cref{baru-piU} will remain true when we further increase $\epsilon$, because increasing $\epsilon < 1$ makes $U[\epsilon]$ larger.

Define a morphism of rigid $S$-spaces $w_\alpha \colon \GG_m(\epsilon) \times \GG_m(\epsilon) \times S \to G$ by
\[ w_\alpha(x_1, x_2, s) = \alpha(x_1^{-1} x_2^{-1}, s) \alpha(x_1, s) \alpha(x_2, s). \]
This is well-defined since $x_1, x_2 \in \GG_m(\epsilon)$ implies that $x_1^{-1}x_2^{-1} \in \GG_m(\epsilon^2)$, and the domain of~$\alpha$ is $\GG_m(\epsilon^2) \times S$.

\begin{lemma} \label{w-alpha-U}
After increasing $\epsilon$, the morphism
$w_\alpha \colon \GG_m(\epsilon) \times \GG_m(\epsilon) \times S \to G$ factors through $U$.
\end{lemma}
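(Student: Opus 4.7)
The plan is to: first, compute $w_\alpha|_{\ov\GG_m^2 \times S}$ and observe it factors through $U$; second, apply a two-variable analogue of \cref{relatively-compact-thickening} to show that the admissible open $Y := w_\alpha^{-1}(U)$ contains $\GG_m(\epsilon')^2 \times S$ for some $\epsilon' < 1$; finally, replace $\epsilon$ by $\epsilon'$, after possibly enlarging $K$ so that $\epsilon' \in \abs{K^\times}$.

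For the first step, we use that $\alpha|_{\ov\GG_m \times S} = \bar u \cdot \bar\phi$ by \cref{baru-through-tube}(a). Since $G$ is commutative, for $(x_1, x_2, s) \in \ov\GG_m^2 \times S$,
\[ w_\alpha(x_1, x_2, s) = \bar u(x_1^{-1} x_2^{-1}, s) \bar u(x_1, s) \bar u(x_2, s) \cdot \bar\phi(x_1^{-1} x_2^{-1}, s) \bar\phi(x_1, s) \bar\phi(x_2, s); \]
the second product is the identity because $\bar\phi$ is an $S$-group homomorphism. By \cref{baru-piU} each $\bar u$-factor lies in $U[\epsilon]$, which is an $S$-subgroup of $U$ by \cref{formal-group-congruence-subgroup}, so $w_\alpha|_{\ov\GG_m^2 \times S}$ factors through $U[\epsilon] \Subset_S U$.

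The hard part is the thickening step: $Y$ is an admissible open of $\GG_m(\epsilon)^2 \times S$ containing $\ov\GG_m^2 \times S$, and we must show it contains $\GG_m(\epsilon')^2 \times S$ for some $\epsilon' < 1$. I would adapt the specialisation-map argument in the proof of \cref{relatively-compact-thickening}. Using that $w_\alpha(\ov\GG_m^2 \times S) \subset U[\epsilon] \Subset_S U$, one chooses formal $R$-models for $\GG_m(\epsilon)^2 \times S$ and $Y$ such that $\ov\GG_m^2 \times S$ corresponds to an open formal subscheme whose schematic closure in $Y_0$ is proper over $S_0$. Let $Z$ be the preimage under the specialisation map of the complement of this closure, and decompose $Z$ into the four quasi-compact pieces defined by $\abs{x_i} \leq 1$ or $\abs{x_i} \geq 1$ for $i = 1, 2$. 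On each piece, apply the maximum principle to the monomial $x_1^{\tau_1} x_2^{\tau_2}$ (with $\tau_i \in \{\pm 1\}$ chosen so that $\abs{x_i^{\tau_i}} \leq 1$ on that piece): disjointness of $Z$ from $\ov\GG_m^2 \times S = \{\abs{x_1} = \abs{x_2} = 1\}$ forces the supremum to be strictly less than $1$, and the four uniform bounds combine to yield the desired $r' < 1$ with $\GG_m(r')^2 \times S \subset Y$.
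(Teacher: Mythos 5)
Your overall skeleton (restrict to $\ov\GG_m \times \ov\GG_m \times S$, see that the restriction lands in $U[\epsilon]$, then thicken the locus $w_\alpha^{-1}(U)$ from the unit torus to a larger annulus) is the same as the paper's, and your first step is correct and matches the paper's computation. The genuine gap is at the pivot of the argument: from ``$w_\alpha(\ov\GG_m^2 \times S) \subset U[\epsilon] \Subset_S U$'' you assert that one can choose formal models so that the schematic closure of $(\ov\GG_m^2\times S)_0$ in $Y_0$ is proper over $S_0$, i.e.\ that $\ov\GG_m \times \ov\GG_m \times S \Subset_S w_\alpha^{-1}(U)$. Relative $S$-compactness of the \emph{image} does not by itself give relative $S$-compactness of the \emph{source} inside the preimage: for instance, a constant morphism from $D^2 \times S$ into a point of $U[\epsilon]$ has image relatively $S$-compact in $U$, yet $D^2 \times S$ is not relatively $S$-compact in the preimage of $U$ (which is all of $D^2 \times S$). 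What rescues the claim here is a combination of two further inputs, and this combination is the real content of the lemma as proved in the paper: (a) one first factors $w_\alpha$ through a quasi-compact open $Y' \subset G$ containing $U$ (needed so that the fibre-product formalism for $\Subset$ applies), and applies \cref{relatively-compact-base-change2} to get $w_\alpha^{-1}(U[\epsilon]) = U[\epsilon] \times_{Y'} X_2 \Subset_{X_2} w_\alpha^{-1}(U)$ with $X_2 = \GG_m(\epsilon)\times\GG_m(\epsilon)\times S$ --- note this is relative compactness over $X_2$, not over $S$; and (b) one uses $\ov\GG_m\times\ov\GG_m\times S \Subset_S X_2$ (true because $\epsilon<1$) together with \cite[Rmk~3.6.2(b)]{Lut16} to upgrade $\Subset_{X_2}$ to $\Subset_S$. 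Your proposal invokes neither ingredient, so the step ``one chooses formal models such that\dots'' is unjustified as written.

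Two smaller remarks. Your plan to redo the specialisation-map argument of \cref{relatively-compact-thickening} in two variables, splitting $Z$ into four quasi-compact pieces and applying the maximum principle to $x_1^{\pm1}$, $x_2^{\pm1}$, is sound; the paper in fact cites the one-variable lemma directly for this two-variable situation, so your version is, if anything, more explicit on that point. Finally, no extension of $K$ is needed to pass to $\epsilon'$, since annuli of arbitrary positive real radius are defined; one should, however, take $\epsilon' \geq \epsilon$, so that earlier statements (such as \cref{baru-piU}) remain valid after the replacement.
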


\begin{proof}
Let $\bar w_\alpha$ denote the restriction of $w_\alpha$ to $\ov\GG_m \times \ov\GG_m \times S$.
Since $\bar\phi \colon \ov\GG_m \times S \to G$ is a homomorphism of commutative $S$-groups, and using the definition of~$\bar u$ in \cref{baru-through-tube}(a), we have
\begin{equation*} % \label{eqn:barw-baru}
\bar w_\alpha(x_1, x_2, s) = \bar u(x_1^{-1} x_2^{-1}, s) \bar u(x_1, s) \bar u(x_2, s) \text{ on } \ov\GG_m \times \ov\GG_m \times S.
\end{equation*}
Using \cref{baru-piU}, and since $U[\epsilon]$ is an $S$-subgroup of~$G$, it follows that $\bar w_\alpha$ factors through $U[\epsilon]$.

Let $X_2 = \GG_m(\epsilon) \times \GG_m(\epsilon) \times S$.
Since $X_2$ is quasi-compact, there exists a quasi-compact open subspace $Y \subset G$ such that $w_\alpha \colon X_2 \to G$ factors through~$Y$.
After enlarging the quasi-compact space~$Y \subset G$, we may also assume that $U \subset Y$.

By construction, $U[\epsilon] \Subset_S U$ and so $U[\epsilon] \Subset_Y U$.
Applying \cref{relatively-compact-base-change2} to $Y$ and $X_2$ defined in the previous paragraph (with $X_2 \to Y$ given by $w_\alpha$), $X_1=U$ (with $X_1 \to Y$ being the inclusion), and $U_1=U[\epsilon]$, we obtain
\[ w_\alpha^{-1}(U[\epsilon]) = U[\epsilon] \times_Y X_2 \Subset_{X_2} U \times_Y X_2 = w_\alpha^{-1}(U). \]

Since $\bar w_\alpha$ factors through $U[\epsilon]$, we have $\ov\GG_m \times \ov \GG_m \times S \subset w_\alpha^{-1}(U[\epsilon])$. 
Thus it follows that
\[ \ov\GG_m \times \ov \GG_m \times S \Subset_{X_2} w_\alpha^{-1}(U) \subset X_2. \]
By the definition of~$X$, we also have $\ov\GG_m \times \ov\GG_m \times S \Subset_S X_2$.
Hence, by \cite[Rmk.~3.6.2(b)]{Lut16}, we obtain $\ov\GG_m \times \ov\GG_m \times S \Subset_S w_\alpha^{-1}(U)$.

By \cref{relatively-compact-thickening}, it follows that $w_\alpha^{-1}(U)$ contains $\GG_m(\epsilon') \times \GG_m(\epsilon') \times S$ for some $\epsilon'$ such that $\epsilon \leq \epsilon' < 1$.
Replacing $\epsilon$ by $\epsilon'$, the lemma holds.
\end{proof}

After replacing $K$ by a finite extension and increasing $\epsilon$, we may assume that there is a uniformiser $\pi \in K^\times$ with $\abs{\pi} = \epsilon$.

In what follows, we use the notation $\equiv$ as in \cref{notation:congruence}, with respect to $\zeta \colon U \to D^g \times S$.
For $1 \leq i \leq g$, let $\zeta_i \colon U \to D$ denote the composition of~$\zeta$ with the projection onto the $i$-th copy of $D$.

\subsection{Induction}

The core of the proof of \cref{hom-extends-thm} is an induction argument, showing that $\bar u$ can be approximated arbitrarily well by morphisms which extend to $\GG_m(\epsilon^2) \times S$.
We now carry out this induction.
In order to make the structure of the inductive argument clearly visible, we have postponed the calculation at its centre to \cref{coeffs-congruence-lemma} below.

\pagebreak % COSMETIC

\begin{lemma} \label{induction}
For each $\lambda \in \ZZ_{\geq 1}$, there exists an $S$-morphism $u_\lambda \colon \GG_m(\epsilon^2) \times S \to U$ satisfying
\begin{equation} \label{eqn:ui-congruence}
\bar u \equiv u_\lambda|_{\ov\GG_m \times S} \bmod \pi^\lambda.
\end{equation}
\end{lemma}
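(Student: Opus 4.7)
The plan is to argue by induction on~$\lambda$. For the base case $\lambda=1$, take $u_1 \colon \GG_m(\epsilon^2) \times S \to U$ to be the constant morphism at the identity section. Since \cref{baru-piU} forces $\bar u$ to factor through $U[\epsilon]=U[\abs{\pi}]$, the congruence $\bar u \equiv u_1|_{\ov\GG_m \times S} \bmod \pi$ is immediate.

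For the inductive step, given $u_\lambda$, set $e_\lambda := \bar u \cdot u_\lambda^{-1}|_{\ov\GG_m \times S}$; by \cref{formal-group-congruence-piU} and the inductive hypothesis, $e_\lambda$ factors through $U[\epsilon^\lambda]$. I would then introduce the three-term product
\[ w_\lambda(x_1, x_2, s) := (\alpha \cdot u_\lambda^{-1})(x_1^{-1} x_2^{-1}, s) \cdot (\alpha \cdot u_\lambda^{-1})(x_1, s) \cdot (\alpha \cdot u_\lambda^{-1})(x_2, s), \]
well-defined on $\GG_m(\epsilon) \times \GG_m(\epsilon) \times S$ with values in~$G$. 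On $\ov\GG_m \times \ov\GG_m \times S$ the $\bar\phi$-factors cancel (since $\bar\phi$ is a homomorphism), so $w_\lambda|_{\ov\GG_m^2 \times S}$ equals $e_\lambda(x_1^{-1} x_2^{-1}) \cdot e_\lambda(x_1) \cdot e_\lambda(x_2)$, which lies in the subgroup $U[\epsilon^\lambda]$. By a relative-compactness argument paralleling \cref{w-alpha-U}---carried out once and for all before beginning the induction so that $\epsilon$ remains fixed throughout---$w_\lambda$ itself factors through~$U$.

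The heart of the argument, which I would isolate as the postponed \cref{coeffs-congruence-lemma}, is then a Laurent coefficient calculation. Expand $\zeta \circ w_\lambda = \sum_{i,j} a_{i,j} \, x_1^i x_2^j$ with $a_{i,j} \in \cO(S)^g$; since $w_\lambda$ lands in $U \cong D^g \times S$ and the series converges on $\GG_m(\epsilon)^2 \times S$, the standard Laurent bound yields $\abs{a_{i,j}}_S \leq \epsilon^{\abs{i}+\abs{j}}$. Writing $\zeta \circ e_\lambda = \sum_n a_n x^n$ and applying \cref{formal-group-F2-congruence} twice to the three-fold product (all of whose factors, and intermediate partial products, lie in $U[\epsilon^\lambda]$ by \cref{formal-group-congruence-subgroup}) gives
\[ \zeta(w_\lambda|_{\ov\GG_m^2 \times S})(x_1, x_2) \equiv \sum_n a_n \bigl( x_1^{-n} x_2^{-n} + x_1^n + x_2^n \bigr) \bmod \pi^{2\lambda}. \]
Comparing the coefficient of $x_1^{-n} x_2^{-n}$ for $n \neq 0$ shows $a_{-n,-n} \equiv a_n \bmod \pi^{2\lambda}$, yielding the key estimate $\abs{a_n}_S \leq \max\bigl(\epsilon^{2\abs{n}},\, \epsilon^{\lambda+1}\bigr)$ (using $2\lambda \geq \lambda+1$, together with $\abs{a_0}_S \leq \epsilon^\lambda \leq 1 = \epsilon^{2\abs{0}}$ for the case $n=0$).

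To conclude, define the correction $h_\lambda \colon \GG_m(\epsilon^2) \times S \to U$ via $\zeta \circ h_\lambda = \sum_n b_n x^n$, where $b_n = a_n$ when $\abs{a_n}_S > \epsilon^{\lambda+1}$ and $b_n = 0$ otherwise. The estimate above forces $\abs{b_n}_S \leq \epsilon^{2\abs{n}}$, so the series converges on $\GG_m(\epsilon^2) \times S$ with sup-norm at most~$1$, and hence lands in~$U$; meanwhile $\abs{b_n - a_n}_S \leq \epsilon^{\lambda+1}$ by construction, so $h_\lambda \equiv e_\lambda \bmod \pi^{\lambda+1}$ on $\ov\GG_m \times S$. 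Setting $u_{\lambda+1} := u_\lambda \cdot h_\lambda$ in the commutative $S$-group~$U$, and using $\bar u = u_\lambda|_{\ov\GG_m \times S} \cdot e_\lambda$, we have $u_{\lambda+1} \cdot \bar u^{-1}|_{\ov\GG_m \times S} = h_\lambda \cdot e_\lambda^{-1}|_{\ov\GG_m \times S}$, which factors through $U[\epsilon^{\lambda+1}]$ by \cref{formal-group-congruence-piU}, establishing \eqref{eqn:ui-congruence} at level $\lambda+1$. The main obstacle is precisely this coefficient comparison: the triple-product structure of $w_\lambda$ encodes the cocycle identity for the near-homomorphism $\alpha \cdot u_\lambda^{-1}$, and it is this which forces the Laurent coefficients of $e_\lambda$ to decay at the doubled rate $\epsilon^{2\abs{n}}$ rather than the naive rate $\epsilon^{\abs{n}}$---exactly what is needed for the truncated $h_\lambda$ to extend from $\ov\GG_m$ to the larger annulus $\GG_m(\epsilon^2)$.
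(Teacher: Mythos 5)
Your proposal is correct and follows essentially the same route as the paper: the same base case, the same error term $e_\lambda$ and triple-product correction $w_\lambda$, the same double application of \cref{formal-group-F2-congruence} with the Laurent-coefficient comparison along $x_1^{-n}x_2^{-n}$, and a truncation that (just as in the paper) produces a Laurent polynomial extending to $\GG_m(\epsilon^2) \times S$, with \cref{formal-group-congruence-piU} closing the induction. Two small repairs: $w_\lambda$ factors through $U$ not by a fresh relative-compactness argument but simply because $w_\alpha$ (handled once by \cref{w-alpha-U} before the induction) and $u_\lambda$ all map into the $S$-subgroup $U$; and the convergence of $\sum_n b_n x^n$ on $\GG_m(\epsilon^2) \times S$ should be justified by observing that your criterion forces $b_n = 0$ whenever $2\abs{n} > \lambda+1$ (so the sum is finite), since the bound $\abs{b_n}_S \leq \epsilon^{2\abs{n}}$ alone gives only boundedness of the terms.
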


\begin{proof}
The proof is by induction on~$\lambda$.

For the base case, we take $u_1$ to be the composition of the projection $\GG_m(\epsilon^2) \times S \to S$ with the zero section $S \to U$.
By \cref{baru-piU}, we have $\bar u \equiv u_1 \bmod \pi$.
Thus \eqref{eqn:ui-congruence} holds for $\lambda=1$.

\smallskip

For the inductive step, assume that \eqref{eqn:ui-congruence} holds for a certain $\lambda \geq 1$.
We construct $u_{\lambda+1}$ such that \eqref{eqn:ui-congruence} holds for $\lambda+1$.

Define
\begin{align*}
\bar v_\lambda & = \bar u \cdot (u_\lambda|_{\ov\GG_m \times S})^{-1} \colon \ov\GG_m \times S \to G,
\end{align*}
where $\cdot$ and ${}^{-1}$ refer to the $S$-group law on~$G$.
By the inductive hypothesis~\eqref{eqn:ui-congruence} and \cref{formal-group-congruence-piU}, $\bar v_\lambda$ factors through $U[\epsilon^\lambda]$.

Define $w_\lambda \colon  \GG_m(\epsilon) \times \GG_m(\epsilon) \times S \to G$ by
\[ w_\lambda(x_1, x_2, s) = w_\alpha(x_1, x_2, s) \cdot u_\lambda(x_1^{-1}x_2^{-1}, s)^{-1} \cdot u_\lambda(x_1, s)^{-1} \cdot u_\lambda(x_2, s)^{-1}. \]
Since $w_\alpha$ and $u_\lambda$ map into $U$, $w_\lambda$ factors through~$U$.
Furthermore, since $\bar\phi$ is a homomorphism of commutative $S$-groups, we can calculate
\[ w_\lambda(x_1, x_2, s) = \bar v_\lambda(x_1^{-1}x_2^{-1}, s) \cdot \bar v_\lambda(x_1, s) \cdot \bar v_\lambda(x_2, s) \]
on $\ov\GG_m \times \ov\GG_m \times S$.

Therefore, applying \cref{coeffs-congruence-lemma} to $\bar v_\lambda$ and $w_\lambda$, we obtain that there exists an $S$-morphism $v'_\lambda \colon \GG_m(\epsilon^2) \times S \to U$ such that
\begin{equation} \label{eqn:vij-bound}
\bar v_\lambda \equiv v'_\lambda|_{\ov\GG_m \times S} \bmod \pi^{2\lambda}.
\end{equation}
Since $\bar u = u_\lambda \cdot \bar v_\lambda$, it follows from \eqref{eqn:vij-bound}
and \cref{formal-group-congruence-piU} that
\[ \bar u \equiv u_\lambda \cdot v'_\lambda|_{\ov\GG_m \times S} \bmod \pi^{2\lambda}. \]
Hence, taking $u_{\lambda+1} = u_\lambda \cdot v'_\lambda \colon \GG_m(\epsilon^2) \times S \to U$, we obtain
\[ \bar u \equiv u_{\lambda+1}|_{\ov\GG_m \times S} \bmod \pi^{2\lambda}. \]
Since $2\lambda \geq \lambda+1$, it follows that \eqref{eqn:ui-congruence} holds for $\lambda+1$.
\end{proof}

The following lemma extracts the main calculation used in the proof of \cref{induction}.
Note that the conclusion of this lemma is stronger than we need: we only need a congruence modulo $\pi^{\lambda+1}$ rather than $\pi^{2\lambda}$.

\pagebreak % COSMETIC

\begin{lemma} \label{coeffs-congruence-lemma}
Let $\bar v \colon \ov\GG_m \times S \to U$ be a morphism of rigid $S$-spaces such that:
\begin{enumerate}[label=(\roman*)]
\item there exists an $S$-morphism $w \colon \GG_m(\epsilon) \times \GG_m(\epsilon) \times S \to U$ satisfying
\[ w(x_1, x_2, s) = \bar v(x_1^{-1}x_2^{-1}, s) \cdot \bar v(x_1, s) \cdot \bar v(x_2, s) \]
for all $(x_1, x_2, s) \in \ov\GG_m \times \ov\GG_m \times S$;
\item the image of $\bar v$ is contained in $U[\epsilon^\lambda]$, where $\lambda \in \ZZ_{\geq 1}$.
\end{enumerate}

Then there exists an $S$-morphism $v' \colon \GG_m(\epsilon^2) \times S \to U$ such that 
\[ \bar v \equiv v'|_{\ov\GG_m \times S} \bmod \pi^{2\lambda}. \]
\end{lemma}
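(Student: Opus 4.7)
The plan is to represent both $\bar v$ and $w$ by Laurent series via the coordinates $\zeta_i$ on $U \cong D^g \times S$, linearise the identity in hypothesis~(i) modulo $\pi^{2\lambda}$ using \cref{formal-group-F2-congruence}, and then construct $v'$ by reading off the coefficients of $w$ along the anti-diagonal $n_1 = n_2 = -n$.

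Fix $\zeta \colon U \to D^g \times S$ and set $\bar v_i = \zeta_i \circ \bar v$, $w_i = \zeta_i \circ w$. Since $\bar v$ factors through $U[\epsilon^\lambda]$, the function $\bar v_i \in \cO(\ov\GG_m \times S)$ has a convergent Laurent expansion
\[ \bar v_i(x, s) = \sum_{n \in \ZZ} a_{i, n}(s)\, x^n, \]
with $a_{i, n} \in \cO^\circ(S)$ and $\abs{a_{i, n}}_S \leq \epsilon^\lambda$. Similarly $w_i \in \cO(\GG_m(\epsilon) \times \GG_m(\epsilon) \times S)$ expands as
\[ w_i(x_1, x_2, s) = \sum_{n_1, n_2 \in \ZZ} b_{i, n_1, n_2}(s)\, x_1^{n_1} x_2^{n_2}, \]
and since $w$ factors through $U$ so that $\abs{w_i} \leq 1$ on $\GG_m(\epsilon) \times \GG_m(\epsilon) \times S$, the sup-norm bound yields $\abs{b_{i, n_1, n_2}}_S \leq \epsilon^{|n_1| + |n_2|}$, with this quantity tending to zero as $\abs{n_1} + \abs{n_2} \to \infty$.

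Next I would linearise the group law. By \cref{formal-group-congruence-subgroup}, $U[\epsilon^\lambda]$ is an $S$-subgroup, so all three factors $\bar v((x_1 x_2)^{-1}, s)$, $\bar v(x_1, s)$, $\bar v(x_2, s)$, together with their pairwise products, stay inside $U[\epsilon^\lambda]$ on $\ov\GG_m \times \ov\GG_m \times S$. Iterating \cref{formal-group-F2-congruence} then gives
\[ w_i(x_1, x_2, s) \equiv \bar v_i((x_1 x_2)^{-1}, s) + \bar v_i(x_1, s) + \bar v_i(x_2, s) \bmod \pi^{2\lambda} \]
on $\ov\GG_m \times \ov\GG_m \times S$. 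Expanding $\bar v_i((x_1 x_2)^{-1}, s) = \sum_n a_{i,-n}(s)\, x_1^n x_2^n$, the coefficient of $x_1^n x_2^n$ on the right-hand side equals $a_{i, -n}$ for $n \neq 0$. Since a Laurent series on the reduced affinoid $\ov\GG_m \times \ov\GG_m \times S$ of sup norm $\leq \epsilon^{2\lambda}$ has every coefficient of norm $\leq \epsilon^{2\lambda}$, term-by-term comparison gives
\[ a_{i, n} \equiv b_{i, -n, -n} \bmod \pi^{2\lambda} \quad \text{for all } n \neq 0. \]

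Finally, set
\[ v'_i(x, s) := a_{i, 0}(s) + \sum_{n \neq 0} b_{i, -n, -n}(s)\, x^n. \]
The estimate $\abs{b_{i, -n, -n}}_S \leq \epsilon^{2|n|}$ (together with the fact that it tends to zero) ensures this Laurent series converges on the larger annulus $\GG_m(\epsilon^2) \times S$ with sup norm $\leq 1$, so the $v'_i$ assemble via $\zeta^{-1}$ into a morphism $v' \colon \GG_m(\epsilon^2) \times S \to U$, and the coefficient congruence immediately yields $\bar v \equiv v'|_{\ov\GG_m \times S} \bmod \pi^{2\lambda}$. The crux of the argument, and indeed of the entire induction of \cref{induction}, is the asymmetry that the anti-diagonal coefficients $b_{i, -n, -n}$ are a factor of $\epsilon^{|n|}$ smaller than the axis coefficients $b_{i, n, 0}$ or $b_{i, 0, n}$ (which also approximate $a_{i, n}$ modulo $\pi^{2\lambda}$); this extra decay is precisely what upgrades convergence from $\ov\GG_m$ to $\GG_m(\epsilon^2)$.
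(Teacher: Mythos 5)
Your proof is correct and follows the same basic line as the paper: write Laurent expansions for $\zeta_i\circ\bar v$ and $\zeta_i\circ w$, read off the coefficient bound $\abs{b_{i,n_1,n_2}}_S\leq\epsilon^{\abs{n_1}+\abs{n_2}}$, linearise the group law modulo $\pi^{2\lambda}$ using \cref{formal-group-F2-congruence}, compare anti-diagonal coefficients, and use the resulting decay $\abs{b_{i,-n,-n}}_S\leq\epsilon^{2\abs{n}}$ to build $v'$. The one place you diverge is the construction of $v'$: you keep $a_{i,0}$ and substitute the full anti-diagonal series $\sum_{n\neq 0}b_{i,-n,-n}\xi^n$ (an infinite, convergent Laurent series on $\GG_m(\epsilon^2)\times S$), whereas the paper instead takes $v'_i$ to be the finite truncation $\sum_{\abs{j}\leq\lambda}\bar v_{ij}\xi^j$, for which convergence is automatic and the bound $\abs{\bar v_{ij}}_S\leq\max\{\abs{w_{i,-j,-j}}_S,\epsilon^{2\lambda}\}\leq\epsilon^{2\abs{j}}$ is only needed (and only holds) in the range $\abs{j}\leq\lambda$. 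Both routes work. Your treatment of the $n=0$ term is in fact slightly more careful than the paper's: as you observe, the coefficient of $\xi_1^0\xi_2^0$ on the linearised right-hand side is $3a_{i,0}$, not $a_{i,0}$, so the paper's displayed congruence $w_{ijj}\equiv\bar v_{i,-j}$ ``for all $j\in\ZZ$'' is not literally correct at $j=0$; this is harmless there because only the trivial bound $\abs{\bar v_{i,0}}_S\leq 1$ is used at $j=0$, but your choice to retain $a_{i,0}$ as the constant coefficient of $v'_i$ sidesteps the issue cleanly.
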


\begin{proof}
Write the Laurent series for $\zeta_i \circ \bar v$ and $\zeta_i \circ w$, respectively, as
\begin{align*}
   \bar v_i(\xi, s) & = \sum_{j \in \ZZ} \bar v_{ij}(s) \xi^j \in \tatealgebra{\cO^\circ(S)}{\xi, \xi^{-1}},
\\ w_i(\xi_1, \xi_2, s) & = \sum_{j,k \in \ZZ} w_{ijk}(s) \xi_1^j \xi_2^k \in \tatealgebra{\cO(S)}{\xi_1, \xi_1^{-1}, \xi_2, \xi_2^{-1}}.
\end{align*}
Since $w_i$ maps $\GG_m(\epsilon) \times \GG_m(\epsilon) \times S$ into~$D$, we have
\begin{equation} \label{eqn:wijk-bound}
\abs{w_{ijk}}_S \leq \epsilon^{\abs{j}+\abs{k}}
\end{equation}
for all $j,k \in \ZZ$.

By hypothesis~(ii) and \cref{formal-group-F2-congruence}, we obtain the following congruence of morphisms $\ov\GG_m \times \ov\GG_m \times S \to D$:
\begin{equation} \label{eqn:w-u-congruence}
w_i(\xi_1, \xi_2, s) \equiv \bar v_i(\xi_1^{-1}\xi_2^{-1}, s) + \bar v_i(\xi_1, s) + \bar v_i(\xi_2, s) \bmod \pi^{2\lambda}.
\end{equation}
Expanding as Laurent series in $\tatealgebra{\cO^\circ(S)}{\xi_1, \xi_2, \xi_1^{-1}, \xi_2^{-1}}$ and comparing the coefficients of $\xi_1^{-j}\xi_2^{-j}$, we obtain the following congruence for all $j \in \ZZ$:
\begin{equation} \label{eqn:w-u-diag-congruence}
w_{ijj} \equiv \bar v_{i,-j} \bmod \pi^{2\lambda}.
\end{equation}

Define $v'_i$ to be the truncation of the Laurent series of $\zeta \circ \bar u$:
\[ v'_i(\xi) = \sum_{j=-\lambda}^\lambda \bar u_{ij} \xi^j. \]
Since $v'_i$ is a Laurent polynomial, it defines a holomorphic function on $\GG_m(\epsilon^2) \times S$.
Since all non-zero terms of $v'_i$ satisfy $\abs{j} \leq \lambda$, and using \eqref{eqn:w-u-diag-congruence} and~\eqref{eqn:wijk-bound}, we obtain that the non-zero coefficients of $v'_i$ satisfy $\abs{\bar v_{ij}}_S \leq \max\{ \abs{w_{i,-j,-j}}_S, \epsilon^{2\lambda} \} \leq \epsilon^{2\abs{j}}$.
Hence, $v'_i$ maps $\GG_m(\epsilon^2) \times S$ into~$D$.
Hence there is a rigid $S$-morphism $v' \colon \GG_m(\epsilon^2) \times S \to U$ such that $\zeta_i \circ v' = v'_i$ for all~$i$.

Thanks to~\eqref{eqn:wijk-bound}, we have $w_{i,-j,-j} \equiv 0 \bmod \pi^{2\lambda}$ whenever $\abs{j} \geq \lambda$.
Therefore, by~\eqref{eqn:w-u-diag-congruence}, $\zeta_i \circ \bar v \equiv v'_i|_{\ov\GG_m \times S} \bmod \pi^{2\lambda}$ for each~$i$, proving the lemma.
\end{proof}

\subsection{Conclusion of proof}

We are now ready to complete the proof of \cref{hom-extends-thm}.

\begin{lemma} \label{exists-u}
There exists a rigid $S$-morphism $u \colon \GG_m(\epsilon) \times S \to U$ such that $u|_{\ov\GG_m \times S} = \bar u$.
\end{lemma}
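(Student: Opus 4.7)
The plan is to extract $u$ by taking a $\lambda \to \infty$ limit of the approximants $u_\lambda$ from \cref{induction} coordinate-wise in Laurent series. Writing $\zeta_i \colon U \to D$ for the coordinates of the $S$-space isomorphism $\zeta \colon U \to D^g \times S$, I would expand
\[ \zeta_i \circ u_\lambda = \sum_{j \in \ZZ} u_{\lambda, ij}(s)\, \xi^j \in \cO^\circ(\GG_m(\epsilon^2) \times S), \qquad \zeta_i \circ \bar u = \sum_{j \in \ZZ} \bar u_{ij}(s)\, \xi^j \in \cO^\circ(\ov\GG_m \times S), \]
with coefficients in $\cO^\circ(S)$, where $\xi$ denotes the standard coordinate on $\GG_m$.

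The argument then rests on two estimates. First, since $\zeta_i \circ u_\lambda$ has sup norm at most $1$ on $\GG_m(\epsilon^2) \times S$, and $\xi^j$ has sup norm $\epsilon^{-2\abs{j}}$ on this annulus, the standard bound on Laurent coefficients gives $\abs{u_{\lambda, ij}}_S \leq \epsilon^{2\abs{j}}$ for every $i,j$. Second, the relation $\bar u \equiv u_\lambda|_{\ov\GG_m \times S} \bmod \pi^\lambda$ provided by \cref{induction} translates, upon comparing Laurent coefficients on the unit annulus, into $\abs{\bar u_{ij} - u_{\lambda, ij}}_S \leq \epsilon^\lambda$ for all $j \in \ZZ$. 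Combining via the ultrametric inequality yields
\[ \abs{\bar u_{ij}}_S \leq \max\{\epsilon^{2\abs{j}},\, \epsilon^\lambda\} \quad \text{for every } \lambda \geq 1 \text{ and } j \in \ZZ. \]
Fixing $j$ and letting $\lambda \to \infty$ isolates the first term and produces the key bound $\abs{\bar u_{ij}}_S \leq \epsilon^{2\abs{j}}$.

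With this in hand, I would define $u$ by declaring $\zeta_i \circ u$ to be the Laurent series $\sum_j \bar u_{ij}(s)\, \xi^j$. The bound above implies $\abs{\bar u_{ij}}_S \cdot \epsilon^{-\abs{j}} \leq \epsilon^{\abs{j}} \to 0$ as $\abs{j} \to \infty$, so the series converges in $\cO(\GG_m(\epsilon) \times S)$ and has sup norm at most $1$ there; hence it defines a morphism $\GG_m(\epsilon) \times S \to D$. Assembling the $g$ coordinates and composing with the $S$-isomorphism $\zeta^{-1}$ yields the desired $S$-morphism $u \colon \GG_m(\epsilon) \times S \to U$. The Laurent series of $\zeta_i \circ u$ and of $\zeta_i \circ \bar u$ on $\ov\GG_m \times S$ coincide by construction, so $u|_{\ov\GG_m \times S} = \bar u$.

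The argument is essentially a convergence exercise; the only subtle point is that two different decay mechanisms for the coefficients $u_{\lambda, ij}$ must be balanced against each other. The defining annulus $\GG_m(\epsilon^2)$ for $u_\lambda$ forces decay in $\abs{j}$ at rate $\epsilon^2$, while the $\pi^\lambda$-approximation by $\bar u$ forces decay in $\lambda$ at rate $\epsilon$. Passing to the limit in $\lambda$ at each fixed $j$ transfers the $\abs{j}$-decay to $\bar u_{ij}$, which is precisely what is needed to define a holomorphic function into $D$ on the intermediate annulus $\GG_m(\epsilon)$.
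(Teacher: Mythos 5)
Your proposal is correct and follows the paper's proof essentially verbatim: expand $\zeta_i \circ \bar u$ and $\zeta_i \circ u_\lambda$ as Laurent series, use the defining annulus $\GG_m(\epsilon^2)$ to get $\abs{u_{\lambda ij}}_S \leq \epsilon^{2\abs{j}}$, combine with the $\pi^\lambda$-congruence to obtain $\abs{\bar u_{ij}}_S \leq \epsilon^{2\abs{j}}$, then observe convergence into $D$ on $\GG_m(\epsilon) \times S$. The only cosmetic difference is that you let $\lambda \to \infty$ at fixed $j$, whereas the paper simply takes $\lambda = 2\abs{j}$ — these are the same observation.
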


\begin{proof}
For $i = 1, \dotsc, g$, let $\bar u_i = \zeta_i \circ \bar u \colon \ov\GG_m \times S \to D$.
Then $\bar u_i \in \cO^\circ(\ov\GG_m \times S) = \tatealgebra{\cO^\circ(S)}{\xi, \xi^{-1}}$.
Thus we can write
\begin{equation} \label{eqn:ui-coeffs}
\bar u_i = \sum_{j \in \ZZ} \bar u_{ij} \xi^j
\end{equation}
where $\bar u_{ij} \in \cO^\circ(S)$.
Similarly, for the morphisms $u_\lambda$ given by \cref{induction}, if we let $u_{\lambda i} = \zeta_i \circ u_\lambda \colon \GG_m(\epsilon^2) \times S \to D$, we obtain Laurent series
\[ u_{\lambda i} = \sum_{j \in \ZZ} u_{\lambda ij} \xi^j \]
where $u_{\lambda ij} \in \cO^\circ(S)$ and $\abs{u_{\lambda ij}}_S \leq \epsilon^{2\abs{j}}$ for all $\lambda$, $i$, $j$.

From \eqref{eqn:ui-congruence}, we have $\bar u_{ij} \equiv u_{\lambda ij} \bmod \pi^\lambda$ for all~$\lambda \in \ZZ_{\geq 1}$, $i = 1, \dotsc, g$ and $j \in \ZZ$.
Applying this for $\lambda=2\abs{j}$, along with $\abs{u_{\lambda ij}}_S \leq \epsilon^{2\abs{j}}$, we obtain that $\abs{\bar u_{ij}}_S \leq \epsilon^{2\abs{j}}$ for all~$j \in \ZZ$.
A fortiori, for each~$i$, $\epsilon^{-j}\abs{\bar u_{ij}}_S \to 0$ as $j \to \infty$ so the power series $\bar u_i$ defines a morphism of rigid spaces $u_i \colon \GG_m(\epsilon) \times S \to D$.
The desired $S$-morphism $u \colon \GG_m(\epsilon) \times S \to U$ can now be defined as $\zeta^{-1} \circ (u_1, \dotsc, u_g)$.
\end{proof}

Using the morphism $u$ from \cref{exists-u}, define
\[ \psi = \alpha \cdot u^{-1} \colon \GG_m(\epsilon) \times S \to G. \]
From the definition of $\bar u$ in \cref{baru-through-tube}(a), we obtain that $\psi|_{\ov\GG_m \times S} = \bar\phi$.
Since $\bar\phi$ is an $S$-group homomorphism, the identity principle implies that
\begin{equation} \label{eqn:psi-partial-hom}
\psi(\xi_1, s) \cdot \psi(\xi_2, s) = \psi(\xi_1 \xi_2, s)
\end{equation}
for all $\ov K$-points $(\xi_1, \xi_2, s)$ in $\GG_m(\epsilon) \times \GG_m(\epsilon) \times S$ satisfying $\xi_1 \xi_2 \in \GG_m(\epsilon)$.

For $n \in \ZZ$, define $A_n \subset \GG_m$ to be the annulus
\[ A_n = \{ x \in \GG_m : \epsilon^{n+1} \leq \abs{x} \leq \epsilon^n \}. \]
The sets $A_n$ for $n \in \ZZ$ form an admissible covering of $\GG_m$.
The only non-empty intersections between sets in this covering are
\[ A_{n-1} \cap A_n = \{ x \in \GG_m : \abs{x} = \epsilon^n \}. \]

Define $\psi_n \colon A_n \times S \to G$ by
\[ \psi_n(\xi, s) = \psi(\pi, s)^n \cdot \psi(\pi^{-n} \xi, s), \]
noting that if $\xi \in A_n$, then $\pi^{-n}\xi \in A_0 \subset \GG_m(\epsilon)$.
If $(y,s) \in (A_{n-1} \cap A_n) \times S$, then
\[ \psi_n(y,s) = \psi(\pi,s)^n \cdot \psi(\pi^{-n} y, s) = \psi(\pi,s)^{n-1} \cdot \psi(\pi^{-n+1}, s) = \psi_{n-1}(y,s), \]
where the middle equality follows from \eqref{eqn:psi-partial-hom} since $\pi$, $\pi^{-n}y$ and $\pi^{-n+1}y$ are all in~$\GG_m(\epsilon)$.
Hence we can glue the $\psi_n$ together into a morphism of rigid $S$-spaces $\phi \colon \GG_m \times S \to G$.
Since $\phi|_{\ov\GG_m \times S} = \psi_0|_{\ov\GG_m \times S} = \bar\phi$ is an $S$-group homomorphism, the identity principle implies that $\phi$ is an $S$-group homomorphism.

This completes the proof of \cref{hom-extends-thm}.

\section{Rigid \texorpdfstring{$S$}{S}-groups without boundary} \label{sec:without-boundary}

In this section, we prove \cref{hom-extends-no-boundary}, which is a version of \cref{hom-extends-thm} with condition~(ii) replaced by the condition that the morphism $G \to S$ is without boundary, which can often be more quickly verified in applications.

\subsection{Morphisms without boundary}

The notion of rigid spaces without boundary comes from \cite[Def.~5.9]{Lut90}.
A relative version of this notion was defined by Lamjoun \cite[Def.~1.5]{Lam99}.
It shall be convenient for us to express Lamjoun's definition in terms of another notion, which we call ``morphisms globally without boundary.''

\begin{definition}
A morphism of rigid spaces $X \to S$ is \defterm{globally without boundary} if there exist two affinoid coverings (with the same index set) $\{ U_j \}$ and $\{ W_j \}$ of~$X$, satisfying, for every~$j$, $U_j \Subset_S W_j$.
\end{definition}

\begin{definition} \cite[Def.~1.5]{Lam99}
A morphism of rigid spaces $X \to S$ is \defterm{without boundary} if there exists an affinoid covering $\{ S_i \}$ of~$S$ such that, for every~$i$, $X \times_S S_i \to S_i$ is globally without boundary.
\end{definition}

In the case $S = \Sp(K)$, a morphism $X \to \Sp(K)$ is without boundary if and only if $X$ is a rigid space without boundary in the sense of \cite[Def.~5.9]{Lut90}.

Note that the difference between a morphism of rigid spaces without boundary and Kiehl's definition of a proper morphism of rigid spaces \cite[Def.~2.3]{Kie67} is that for a proper morphism, the coverings $\{U_j\}$ and $\{W_j\}$ of $X \times_S S_i$ must be finite, while for a morphism without boundary, these coverings may be infinite.

The most useful examples of morphisms without boundary are (1) proper morphisms of rigid spaces and (2) the rigid analytifications of morphisms $\fX \to \fS$ of separated $K$-schemes of finite type (this follows from \cite[Cor.~5.11]{Lut90} and \cite[Prop.~1.13]{Lam99}, applied to $\fX^\an \to \fS^\an \to \Sp(K)$).

The notion of morphisms ``globally without boundary'' seems to be usually more difficult to work with than that of morphisms without boundary.  But it is convenient for stating the following lemma.

\begin{lemma} \label{without-boundary-U}
Let $X \to S$ be a morphism of rigid spaces which is globally without boundary, where $X$ is quasi-separated.
Then, for every quasi-compact open subspace $U \subset X$, there exists a quasi-compact open subspace $W \subset X$ such that $U \Subset_S W$.
\end{lemma}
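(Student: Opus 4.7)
The plan is to exploit the quasi-compactness of $U$ to reduce to a finite sub-cover, and then to glue the $W_j$'s of the given ``globally without boundary'' data.

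More precisely, fix affinoid coverings $\{U_j\}$ and $\{W_j\}$ of $X$ with $U_j \Subset_S W_j$ for every $j$, as provided by the hypothesis that $X \to S$ is globally without boundary. Since $X$ is quasi-separated and $U$ is quasi-compact, $U$ is relatively quasi-compact in $X$; in particular $\{U_j\}$ restricts to an admissible affinoid covering of $U$, and hence (as $U$ is quasi-compact) a finite sub-collection $U_{j_1}, \dots, U_{j_n}$ already covers $U$. Set
\[
    W = W_{j_1} \cup \cdots \cup W_{j_n}.
\]
This is a finite union of affinoids, hence a quasi-compact open subspace of $X$, and $W \supset U$.

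Next, I would argue that $U \Subset_S W$. Each $U_{j_k}$ is relatively $S$-compact in $W_{j_k}$, and $W_{j_k}$ is an open subspace of the quasi-compact open $W$; I expect this to upgrade to $U_{j_k} \Subset_S W$ by the standard compatibility of relative $S$-compactness with enlarging the ambient space through an open immersion (this is the content of \cite[Remark~3.6.2(b)]{Lut16}, which is already invoked elsewhere in the paper in analogous contexts). Having the $n$ quasi-compact open subspaces $W_{j_k}$ with $U_{j_k} \Subset_S W$ covering $U$, I would then apply \cref{relatively-compact-union} (the equivalence (ii) $\Rightarrow$ (i) with $Y=W$ and $V_i = U_{j_i}$) to conclude that $U \Subset_S W$.

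The main obstacle, modest as it is, is the step that promotes $U_{j_k} \Subset_S W_{j_k}$ to $U_{j_k} \Subset_S W$; this amounts to choosing a common formal $R$-model of $W$ in which each $W_{j_k}$ is induced by an open formal subscheme and in which the given formal models witnessing $U_{j_k} \Subset_S W_{j_k}$ can be arranged simultaneously, after suitable admissible formal blowing-ups (using \cite[Cor.~5.4]{BL93II} as in the proof of \cref{alpha-approximation}). Once this formal bookkeeping is in place, the schematic closure of $(U_{j_k})_0$ computed inside $W_0$ is the same as the one computed inside $(W_{j_k})_0$, so its properness over $S_0$ is preserved, and the argument goes through as outlined.
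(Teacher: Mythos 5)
Your proof is correct and takes essentially the same approach as the paper: extract a finite subcover of the quasi-compact $U$ from the $U_j$'s, set $W$ to be the union of the corresponding $W_j$'s, upgrade the relative $S$-compactness from $W_j$ to $W$, and apply \cref{relatively-compact-union}. The only cosmetic difference is that the paper feeds $V_j := U \cap U_j$ (for $j$ in the finite index set) into \cref{relatively-compact-union}, whereas you feed in the affinoids $U_{j_k}$ themselves; both choices work, and the paper also relies (without elaboration) on exactly the upgrade step you flag, via the chain $V_j \subset U_j \Subset_S W_j \subset W$.
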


\begin{proof}
Let $\{U_j\}$ and $\{W_j\}$ be affinoid coverings of~$X$ such that, for all~$j$, $U_j \Subset_S W_j$.

Suppose we are given a quasi-compact open subspace $U$.
For each~$j$, let $V_j = U \cap U_j$.
Since $X$ is quasi-separated, each $V_j$ is quasi-compact.
Since $U$ is quasi-compact, there is a finite set~$J$ such that $U = \bigcup_{j \in J} V_j$.

Let $W = \bigcup_{j \in J} W_j$.
This is quasi-compact open subspace of~$X$.
For each~$j$, we have $V_j \subset U_j \Subset_S W_j \subset W$ so $V_j \Subset_S W$.
Hence, by \cref{relatively-compact-union}, $U \Subset_S W$, as required.
\end{proof}

\subsection{Proof of Corollary~\ref{hom-extends-no-boundary}}

By condition~(ii) of \cref{hom-extends-no-boundary}, $G \to S$ is without boundary.
Therefore, we can choose an affinoid covering $\{S_i\}$ of~$S$ such that each morphism $G \times_S S_i$ is globally without boundary.
It suffices to prove the corollary for the base change to each~$S_i$, since we can glue the resulting homomorphisms $\phi_i \colon \GG_m^g \times S_i \to G \times_S S_i$ in order to obtain $\phi \colon \GG_m^g \times S \to G$.
Thus, in order to prove the corollary, it suffices to assume that $S$ is affinoid and that $G \to S$ is globally without boundary.
We henceforth make these assumptions.

Write $\xi_{1}, \dotsc, \xi_{g} \colon \GG_m \times S \to \GG_m^g \times S$ for the inclusions of the direct factors $\GG_m \to \GG_m^g$ over the base~$S$.
Let
\[ \bar\phi_j = \bar\phi \circ \xi_j|_{\ov\GG_m \times S} \colon \ov\GG_m \times S \to G. \]

By condition~(i) of \cref{hom-extends-no-boundary}, $\bar\phi_j$ factors through~$H$, which has a formal model and hence is quasi-compact.
By \cref{without-boundary-U}, there is a quasi-compact open subspace $X \subset G$ such that $H \Subset_S X$.
Thus $\bar\phi_j$ and~$H$ satisfy condition~(ii) of \cref{hom-extends-thm}.
Meanwhile condition~(i) of \cref{hom-extends-thm} is the same as condition~(i) of \cref{hom-extends-no-boundary}.

Hence, by \cref{hom-extends-thm}, for each $j = 1,\dotsc,g$, there is a unique homomorphism of rigid $S$-groups $\phi_j \colon \GG_m \times S \to G$ whose restriction to $\ov\GG_m \times S$ is equal to $\bar\phi_j$.
Multiplying these $\phi_j$ together, we obtain a unique homomorphism $\phi \colon \GG_m^g \times S \to G$ satisfying $\phi \circ \xi_j = \phi_j$ for all~$j$, completing the proof of \cref{hom-extends-no-boundary}.

\bibliographystyle{amsalpha}
\bibliography{rigid}

\providecommand{\noopsort}[1]{}
\providecommand{\bysame}{\leavevmode\hbox to3em{\hrulefill}\thinspace}
\providecommand{\MR}{\relax\ifhmode\unskip\space\fi MR }
% \MRhref is called by the amsart/book/proc definition of \MR.
\providecommand{\MRhref}[2]{%
  \href{http://www.ams.org/mathscinet-getitem?mr=#1}{#2}
}
\providecommand{\href}[2]{#2}
\begin{thebibliography}{BGR84}

\bibitem[Ber96]{Berthelot}
P.~Berthelot, \emph{Cohomologie rigide et cohomologie rigide \`a supports
  propres: Première partie}, Preprint, Institut de Recherche Math\'ematique de
  Rennes 96-03, 1996.

\bibitem[BGR84]{BGR84}
S.~Bosch, U.~G\"{u}ntzer, and R.~Remmert, \emph{Non-{A}rchimedean analysis},
  Grundlehren der mathematischen Wissenschaften [Fundamental Principles of
  Mathematical Sciences], vol. 261, Springer-Verlag, Berlin, 1984.

\bibitem[BL93a]{BL93I}
S.~Bosch and W.~L\"{u}tkebohmert, \emph{Formal and rigid geometry. {I}. {R}igid
  spaces}, Math. Ann. \textbf{295} (1993), no.~2, 291--317.

\bibitem[BL93b]{BL93II}
\bysame, \emph{Formal and rigid geometry. {II}. {F}lattening techniques}, Math.
  Ann. \textbf{296} (1993), no.~3, 403--429.

\bibitem[Bos79]{Bos79}
S.~Bosch, \emph{Multiplikative {U}ntergruppen in abeloiden
  {M}annigfaltigkeiten}, Math. Ann. \textbf{239} (1979), no.~2, 165--183.

\bibitem[DO]{DO:LGO}
C.~Daw and M.~Orr, \emph{The large {G}alois orbits conjecture under
  multiplicative degeneration}, preprint, available at
  \url{arxiv.org/abs/2306.13463}.

\bibitem[Gro61]{EGAII}
A.~Grothendieck, \emph{\'{E}l\'{e}ments de g\'{e}om\'{e}trie alg\'{e}brique.
  {II}. {{\'E}tude globale \'el\'ementaire de quelques classes de morphismes}},
  Inst. Hautes \'{E}tudes Sci. Publ. Math. (1961), no.~8, 5--222.

\bibitem[Kie67]{Kie67}
R.~Kiehl, \emph{Der {E}ndlichkeitssatz f\"{u}r eigentliche {A}bbildungen in der
  nichtarchimedischen {F}unktionentheorie}, Invent. Math. \textbf{2} (1967),
  191--214.

\bibitem[Lam99]{Lam99}
R.~Lamjoun, \emph{Varieties without boundary in rigid analytic geometry}, Rend.
  Sem. Mat. Univ. Padova \textbf{102} (1999), 29--50.

\bibitem[L{\"{u}}t90]{Lut90}
W.~L{\"{u}}tkebohmert, \emph{Formal-algebraic and rigid-analytic geometry},
  Math. Ann. \textbf{286} (1990), no.~1-3, 341--371.

\bibitem[L{\"{u}}t95]{Lut95}
\bysame, \emph{The structure of proper rigid groups}, J. Reine Angew. Math.
  \textbf{468} (1995), 167--219.

\bibitem[L{\"{u}}t09]{Lut09}
\bysame, \emph{From {T}ate's elliptic curve to abeloid varieties}, Pure Appl.
  Math. Q. \textbf{5} (2009), no.~4, Special Issue: In honor of John Tate. Part
  1, 1385--1427.

\bibitem[L{\"{u}}t16]{Lut16}
\bysame, \emph{Rigid geometry of curves and their {J}acobians}, Ergebnisse der
  Mathematik und ihrer Grenzgebiete. 3. Folge. A Series of Modern Surveys in
  Mathematics, vol.~61, Springer, Cham, 2016.

\bibitem[Mar08]{Mar08}
A.~Martin, \emph{Eine rigid-analytische {V}ersion des {A}rtinschen
  {G}l\"{a}ttungssatzes}, Schriftenreihe des Mathematischen Instituts der
  Universit\"{a}t M\"{u}nster, 3. Serie, vol.~33, Universit\"{a}t M\"{u}nster,
  2008.

\bibitem[Ray71]{Ray70}
M.~Raynaud, \emph{Vari\'et\'es ab\'eliennes et g\'eom\'etrie rigide}, Actes du
  {C}ongr\`es {I}nternational des {M}ath\'ematiciens ({N}ice, 1970), {T}ome 1,
  Gauthier-Villars \'Editeur, Paris, 1971, pp.~473--477.

\end{thebibliography}

\end{document}